\numberwithin{equation}{section}
\newcommand{\mb}[1]{\mathbb{#1}}
\newcommand{\mc}[1]{\mathcal{#1}}
\newcommand{\st}[1]{\substack{#1}}
	\newcommand\backmatter{%
		\if@openright
		\cleardoublepage
		\else
		\clearpage
		\fi
		% \@mainmatterfalse
	}
\newtheorem{thm}{Theorem}[section]
\newtheorem{lemma}{Lemma}[section]
\newtheorem{prop}{Proposition}[section]
\title{Diophantine Approximation with Piatetski-Shapiro Primes}
\subjclass[2010]{11J25,11J54,11J71,11L07,11L20,11N36}
\keywords{Piatetski-Shapiro primes, rational approximation with prime denominator, Harman's sieve. exponential sums}
\author{Stephan Baier}
\address{Stephan Baier,
Ramakrishna Mission Vivekananda Educational and Research Institute, Department of Mathematics, G. T. Road, PO Belur Math, Howrah, West Bengal 711202, India}
\email{stephanbaier2017@gmail.com}
\author{Habibur Rahaman}
\address{Habibur Rahaman, Indian Institute of Science Education \& Research Kolkata,
Department of Mathematics and Statistics, Mohanpur, West Bengal 741246, India}
\email{hr21rs044@iiserkol.ac.in}
\begin{document}
\begin{abstract} We prove that for every irrational number $\alpha$, real number $\beta$, real number $c$ satisfying $1<c<9/8$ and positive real number $\theta$ satisfying $\theta<(9/c-8)/10$, there
exist infinitely many primes of the form $p=\left[n^c\right]$ with $n\in \mathbb{N}$ such that $||\alpha p+\beta||<p^{-\theta}$.
\end{abstract}

\maketitle

% \tableofcontents

\section{Introduction}\label{sec_into_PSprimes}
In this article, we consider Diophantine approximation with denominators which are restricted to Piatetski-Shapiro primes.  These are prime numbers $p$ of the form $\left[n^c\right]$, where $c>1$ is a fixed real number, $n$ runs over the positive integers, and $[x]$ denotes the integral part of $x\in \mathbb{R}$. One may conjecture that, given any non-integer $c>1$, there exist infinitely many such primes. Their investigation was initiated by Piatetski-Shapiro \cite{Pia} who demonstrated their infinitude for $1<c<12/11=1.0909...$. This range has been widened by many authors. The latest record is due to Rivat and Wu \cite{RiW} who obtained a range of $1<c<243/205=1.1853...$. Progress on Piatetski-Shapiro primes measures progress on exponential sums and sieve methods.

The Dirichlet approximation theorem, a cornerstone in Diophantine approximation, implies that for any irrational number $\alpha$, there exist infinitely many positive integers $q$ such that $||\alpha q||<q^{-1}$, where $||x||$ is the distance of $x\in \mathbb{R}$ to the nearest integer. Interesting problems arise when the $q$'s are restricted to a sparse subset of the natural numbers. Many authors have considered the question for which $\theta>0$ one can show the infinitude of {\it primes} $p$ such that $||\alpha p||<p^{-\theta}$. It may be expected that an exponent of $\theta=1-\varepsilon$ is admissible. The first author to consider this problem was Vinogradov who proved that $\theta=1/5-\varepsilon$ is admissible \cite{Vin}. The current record is due to Matom\"aki \cite{Mat} who obtained an exponent of $\theta=1/3-\varepsilon$.  Harman's sieve has become a standard tool for the investigation of this problem (for details on Harman's sieve see \cite{prime_detective_sieve}). 

Dimitrov considered a hybrid problem, restricting the set of primes $p$ in the above Diophantine approximation problem to Piatetski-Shapiro primes. He proved the following in \cite{Dimitrov}.

\begin{thm}[Dimitrov]\label{Dimres}
		Fix an irrational number $\alpha$, a real number $\beta$, a real number $c$ satisfying $1<c<12/11$ and a positive real number $\theta$ satisfying $\theta<(12/c-11)/26$.
Then there exist infinitely many primes of the form $p=\left[n^c\right]$ with $n\in \mathbb{N}$ such that $||\alpha p+\beta||<p^{-\theta}$.
\end{thm}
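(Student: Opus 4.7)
The plan is to follow the framework of Dimitrov's Theorem~\ref{Dimres}, but to sharpen the input exponential sum estimates and to reoptimize the Harman sieve so as to widen the range of $c$ to $1<c<9/8$ and to improve the saving in $\theta$ from $1/26$ to $1/10$. I would first smooth the Diophantine condition $\|\alpha n+\beta\|<\Delta$ (with $\Delta=x^{-\theta}$) by Vaaler-type majorants and minorants, obtaining Fourier expansions
\begin{equation*}
\chi_\Delta(y)=2\Delta+\sum_{0<|h|\le H}c_h\,e(hy)+(\text{small error}),\qquad c_h\ll\min(\Delta,1/|h|),
\end{equation*}
with $H=x^{\theta+\varepsilon}$. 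The $h=0$ contribution, after multiplication by the Piatetski-Shapiro prime detector and summation via Harman's sieve, produces the desired main term of order $\Delta x/\log x$, so that the task reduces to showing that the $h\ne 0$ frequencies contribute strictly less.

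For the Piatetski-Shapiro detector I would use
\begin{equation*}
\mathbf{1}_{n=[k^c]\text{ for some }k\in\mathbb{N}}=\lfloor -n^{1/c}\rfloor-\lfloor -(n+1)^{1/c}\rfloor=\frac{n^{1/c-1}}{c}+\psi\bigl(-(n+1)^{1/c}\bigr)-\psi\bigl(-n^{1/c}\bigr)+O\bigl(n^{1/c-2}\bigr),
\end{equation*}
and replace $\psi(u)=u-\lfloor u\rfloor-1/2$ by its Vaaler-truncated Fourier series up to frequencies of size $K=x^{1-1/c+\varepsilon}$. Combined with the Fourier expansion of $\chi_\Delta$, this reduces the problem to linear sums plus bilinear sums of the form
\begin{equation*}
\sum_{m\sim M}\sum_{n\sim N}a_m b_n\,e\!\left(h\alpha mn+k(mn)^{1/c}\right),\qquad MN\asymp x,
\end{equation*}
for $1\le|h|\le H$ and $1\le|k|\le K$.

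Next I would apply Harman's sieve to the prime indicator, weighted by the product of $\chi_\Delta(\alpha n+\beta)$ and the Piatetski-Shapiro density, decomposing the sieved sum into Type~I pieces (for $M\le M_1$, with one sequence trivial) and Type~II pieces (for $M_1\le M\le M_2$, with general coefficients $a_m,b_n$). The thresholds $M_1,M_2$ are pushed as far as the available exponential sum estimates will allow, and a Buchstab/role-reversal iteration in the style of Heath-Brown is used so that every sieve region is covered by an estimate carrying a sufficient positive proportion of the expected main term.

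The main obstacle will be the Type~II bilinear estimate. I would treat the double sum by a Cauchy--Schwarz step in $m$ followed by a van der Corput $B$-process (equivalently, an exponent pair $(\kappa,\lambda)$) applied to the inner sum, exploiting the combined phase $h\alpha(m-m')n+k\bigl((mn)^{1/c}-(m'n)^{1/c}\bigr)$. The linear piece forces $h$ to lie in a middle range, while the non-linear piece supplies extra cancellation through the curvature of $n^{1/c}$; balancing the two simultaneously controls $H$ (hence $\theta$) and $K$ (hence the admissible range of $c$), and with careful optimization it is precisely this balance that should produce the stated constraints $c<9/8$ and $\theta<(9/c-8)/10$. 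The Type~I sums reduce to one-variable bounds via Kusmin--Landau and van der Corput and should not be the binding constraint. Summing the contributions of all admissible $h,k$ and showing they are $o(\Delta x/\log x)$ then completes the proof.
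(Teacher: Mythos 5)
You are asked about Theorem \ref{Dimres}, which this paper does not actually prove: it is quoted from \cite{Dimitrov}, where it is established via Vaughan's identity. What you sketch is instead a proof of the stronger Theorem \ref{main_thm} (range $1<c<9/8$ and $\theta<(9/c-8)/10$), which does imply Theorem \ref{Dimres}, since $12/11<9/8$ and $(12/c-11)/26<(9/c-8)/10$ whenever $c<114/98$. Your outline coincides in all essentials with the paper's argument for Theorem \ref{main_thm}: Fourier-analytic (Vaaler/Harman) detection of the condition $||\alpha n+\beta||<\Delta$, detection of the Piatetski--Shapiro condition through the fractional part of $n^{1/c}$ (the paper uses the sufficient condition $||n^{\gamma}+2\delta||<\delta$ rather than the exact $\psi$-function identity, but this is cosmetic), Harman's sieve with a Type I/Type II decomposition, and bilinear bounds via Cauchy--Schwarz and the van der Corput $B$-process in the style of Heath-Brown \cite{H-B} and Balog--Friedlander \cite{Balog}.

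There is, however, one concrete misdiagnosis that leaves the decisive step of the argument unaddressed. You assert that the Type II bilinear estimate is the main obstacle and that its internal balance ``should produce the stated constraints,'' while the Type I sums ``should not be the binding constraint.'' In the actual argument it is the other way around. The Type II range $X^{7/22}\le M\le X^{8/22}$ is covered comfortably under \eqref{mainconds}. The Type I sums, by contrast, carry an arbitrary divisor-bounded coefficient $a_m$ with $m$ ranging up to $X^{15/22}$, so a single one-variable bound (Kusmin--Landau or Lemma \ref{lemma_van_bound}) only disposes of $M\ll X^{\gamma-1/2-\theta}$; the medium range requires the exponent-pair/Balog--Friedlander estimate, and the large range must be recovered from the bilinear bound specialised to $b_n\equiv 1$. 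The theorem's exponents arise precisely from forcing these three $M$-ranges to overlap and cover $(0,15/22)$: the condition $\gamma>8/9$ (i.e.\ $c<9/8$) comes from $5-5\gamma<4\gamma-3$, and $\theta<(9\gamma-8)/10$ comes from $5-5\gamma+6\theta<4\gamma-3-4\theta$. Without this patching of Type I estimates over the full range of $M$ --- which your proposal explicitly dismisses as non-binding --- the sieve hypotheses \eqref{type_i_thm2} cannot be verified and the argument does not close.
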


In this article, we improve Dimitrov's result, widening the $c$- and $\theta$-ranges. We will establish the following. 

\begin{thm}\label{main_thm}
		Fix an irrational number $\alpha$, a real number $\beta$, a real number $c$ satisfying $1<c<9/8$ and a positive real number $\theta$ satisfying $\theta<(9/c-8)/10$.
Then there exist infinitely many primes of the form $p=\left[n^c\right]$ with $n\in \mathbb{N}$ such that $||\alpha p+\beta||<p^{-\theta}$.
\end{thm}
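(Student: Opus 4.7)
The plan is to adapt Dimitrov's proof of Theorem \ref{Dimres}, retaining its overall architecture while replacing both the Harman-sieve decomposition and the exponential-sum inputs by sharper versions. Set $\gamma=1/c$ and let $x$ be a large parameter. The Piatetski-Shapiro indicator
$$\rho(p) := \lfloor -p^\gamma\rfloor - \lfloor -(p+1)^\gamma\rfloor$$
is expanded via the sawtooth identity as
$$\rho(p) = \gamma p^{\gamma-1} + \{p^\gamma\} - \{(p+1)^\gamma\} + O(p^{\gamma-2}),$$
and the fractional parts are then replaced by their truncated Vaaler expansion, producing a short sum of exponentials $e(kp^\gamma)$ for $|k|\le K := x^{1-\gamma+\varepsilon}$. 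In parallel, the condition $\|\alpha p+\beta\|<p^{-\theta}$ is captured by Selberg-type trigonometric majorant and minorant polynomials $\Psi^\pm$ for the indicator of $(-p^{-\theta},p^{-\theta})$ modulo $1$, with length bounded by $H := x^{\theta+\varepsilon}$ and Fourier coefficients of size $O(1/|h|)$.

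Combining these detectors reduces the desired lower bound to the evaluation of $\sum_{p\le x}\Psi^-(\alpha p+\beta)\,\rho(p)$, whose main term (the zero-frequency contribution from both detectors) is of order $x^{\gamma-\theta}/\log x$, and whose error is controlled by the mixed exponential sums
$$S(h,k) := \sum_{p\le x} e\bigl(h(\alpha p+\beta)+kp^\gamma\bigr),\qquad (h,k)\ne(0,0),$$
with $|h|\le H$ and $|k|\le K$. The boundary cases $k=0$ (pure Vinogradov-type sums) and $h=0$ (pure Piatetski-Shapiro sums) can be handled by Matom\"aki-type and Rivat--Wu-type bounds respectively; the novelty lies in the genuinely mixed regime $hk\ne 0$.

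For the mixed regime I would apply Harman's sieve, decomposing $\mathbb{1}_{p\text{ prime}}$ into a linear combination of Type I sums
$$S_I = \sum_{m\le M} a_m \sum_n e\bigl(h\alpha mn+k(mn)^\gamma\bigr)$$
and Type II sums
$$S_{II} = \sum_{M<m\le 2M}\sum_{N<n\le 2N} a_m b_n\, e\bigl(h\alpha mn+k(mn)^\gamma\bigr).$$
Type I sums are controlled by the Kusmin--Landau inequality on the inner phase when its $n$-derivative stays away from integers and by a van der Corput $B$-process (Poisson summation) exploiting the smooth growth of $(mn)^\gamma$ otherwise. Type II sums are handled by Cauchy--Schwarz plus a Weyl-shift $n\mapsto n+q$, reducing matters to a quadratic exponential sum with phase $k\bigl((m(n+q))^\gamma-(mn)^\gamma\bigr)+h\alpha mq$ that is estimated by an exponent pair. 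Optimising the Harman cut-offs $M,N$ and the truncation heights $H,K$ should produce the admissible ranges $1<c<9/8$ and $\theta<(9/c-8)/10$ claimed in Theorem \ref{main_thm}.

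The hard step is the Type II bound in the mixed regime, where the estimates available in each isolated regime (Vinogradov/Matom\"aki for pure $h$, exponent pairs for pure $k$) do not combine cleanly: the linear-in-$\alpha$ and the fractional-power-in-$\gamma$ parts of the phase interfere, and one must choose the Weyl-shift parameter $q$ together with the exponent pair so as to extract cancellation from both simultaneously. Balancing this against the size constraints $H\le x^\theta$ and $K\le x^{1-\gamma}$, while keeping wider Type I/II ranges than those used by Dimitrov, is what should drive the $c$-range from $12/11$ up to $9/8$ and upgrade the $\theta$-exponent from $(12/c-11)/26$ to $(9/c-8)/10$.
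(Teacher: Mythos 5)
Your overall strategy --- Fourier detection of both conditions, reduction to bilinear sums with phases $h\alpha mn+k(mn)^{\gamma}$, and treatment of the pure-linear, pure-power and mixed regimes by separate exponential-sum techniques --- matches the paper's. But there is a structural gap in how you deploy Harman's sieve. You first expand both detectors, reducing to individual exponential sums over primes $S(h,k)=\sum_{p\le x}e(h(\alpha p+\beta)+kp^{\gamma})$, and then propose to ``apply Harman's sieve, decomposing $\mathbb{1}_{p\text{ prime}}$ into a linear combination of Type I and Type II sums.'' Harman's sieve is not an identity for the prime indicator: its Buchstab decomposition discards certain terms using the \emph{positivity} of the sifting function $S(\mathcal{A},z)$, which is exactly why it only yields a lower bound $\sharp(\mathcal{A}\cap\mathbb{P})\gg\lambda\,\sharp(\mathcal{B}\cap\mathbb{P})$ rather than an asymptotic. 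Positivity is unavailable once the summand is an oscillating exponential, so the sieve cannot be applied to an individual $S(h,k)$. What \emph{can} be applied there is a genuine combinatorial identity (Vaughan or Heath-Brown) --- but that is Dimitrov's route, and it forces Type II information on essentially the full dyadic range, which is precisely what caps his result at $c<12/11$ and $\theta<(12/c-11)/26$. The paper reverses your order of operations: it applies Harman's comparison principle \emph{first}, to the sets $\mathcal{A}=\{a\sim X:\|\alpha a+\beta\|<\Delta,\ \|a^{\gamma}+2\delta\|<\delta\}$ and $\mathcal{B}=[X/2,X)\cap\mathbb{N}$ with non-negative coefficients, which requires Type I sums only for $m\le X^{15/22}$ and Type II sums only in the narrow window $X^{7/22}\le m\le X^{8/22}$; the Fourier detection then happens inside the verification of these two axioms. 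That narrowing of the Type II range is the engine of the improvement, and your plan as written does not have access to it.

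Two smaller points. First, you cannot outsource the boundary cases $k=0$ and $h=0$ to ``Matom\"aki-type'' and ``Rivat--Wu-type'' bounds: those are theorems about counting primes obtained from their own sieve decompositions, not estimates for the error terms arising inside a single unified sieve application; the paper instead bounds the pure-linear sums by standard large-sieve-type lemmas and the pure-power sums by Heath-Brown's van der Corput analysis. Second, your diagnosis of where the exponents come from is off: in the paper the constraint $c<9/8$ emerges from the \emph{pure-power Type I} sums $\mathcal{S}_2^{\ast}$ (covering $(0,15/22)$ by the three $M$-ranges requires $5-5\gamma<4\gamma-3$), and $\theta<(9/c-8)/10$ emerges from the \emph{mixed Type I} sums $\mathcal{S}_3^{\ast}$ (via $5-5\gamma+6\theta<4\gamma-3-4\theta$); the mixed Type II sums, which you single out as the hard step, are handled by following Balog and Friedlander and are not the bottleneck. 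Without the correct placement of the sieve and without these specific range computations, the proposal does not reach the claimed exponents.
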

	
Whereas Dimitrov used Vaughan's identity to obtain his result, we here apply Harman's sieve. We also employ arguments due to Heath-Brown \cite{H-B} and  Balog and Friedlander \cite{Balog} to improve certain estimates of bilinear exponential sums. 
\subsection{Notation} In this article, we will use the following notations.
\begin{itemize}
\item We will denote the set of integers, positive integers and primes by $\mathbb{Z}$, $\mathbb{N}$ and $\mathbb{P}$, resprectively. 
    \item Expressions of the form ${f}(x)=O({g}(x))$, ${f}(x) \ll {g}(x)$, and ${g}(x) \gg {f}(x)$ signify that $|{f}(x)| \leq C|{g}(x)|$ for all sufficiently large $x$, where $C>0$ is an absolute constant. A subscript of the form $\ll_A$ means that the implied constant may depend on the parameter $A$. 
    \item For $x,y>0$, the notation $x\asymp y$ means that there are constants $C_2>C_1>0$ such that $C_1x\le y\le C_2x$.
    %\item We write $f(x)=o(g(x))$ if $\lim_{x\to \infty}f(x)/g(x)=0$.
    \item  The notation $x\sim X$ means that $X/2\le x<X$.
    \item For any real number $z$, we will write $e(z):=e^{2\pi iz}$.
    \item We will denote the divisor function by $\tau$, that is, for any $n\in\mb{N}$, $$\tau(n):=\sum_{d|n}1.$$
\end{itemize}
{\bf Acknowledgments.} The authors thank the anonymous referee for his valuable comments. The first-named author thanks the Ramakrishna Mission Vivekananda Educational and Research Institute for an excellent work environment. The research of the second-named author was supported by a Prime Minister Research Fellowship (PMRF ID- 0501972), funded by the Ministry of Education, Govt. of India.

\section{Application of Harman's sieve}
For $X\in \mathbb{N}$ define
\begin{equation} \label{Adef}
\mathcal{A}:=\left\{a\in \mathbb{N} : X/2\le a<X, \ ||\alpha a+\beta||<\Delta, \  || a^{\gamma}+2\delta||<\delta\right\} 
\end{equation}
where we set 
\begin{equation} \label{Deltadef}
\Delta:=X^{-\theta}.
\end{equation}
and 
\begin{equation} \label{deltadef}
\delta:= \frac{\gamma X^{\gamma-1}}{10}. 
\end{equation}
The bulk of our proof of Theorem \ref{main_thm} will consist of establishing the following.

\begin{lemma}\label{bulk} There exist infinitely many positive integers $X$ such that
\begin{equation} \label{newgoal}
\sharp(\mathcal{A}\cap \mathbb{P})>0, 
\end{equation}
where the set $\mathcal{A}$ is defined as in \eqref{Adef}, depending on $X$. 
\end{lemma}

To establish this result, we shall employ Harman's sieve. Set
\begin{equation} \label{Bdef}
\mathcal{B}:=[X/2,X)\cap \mathbb{N}.
\end{equation}
The idea is to compare the cardinalities $\sharp(\mathcal{A}\cap \mathbb{P})$ and $\sharp(\mathcal{B}\cap \mathbb{P})$ via a comparison of bilinear sums associated to the sets $\mathcal{A}$ and $\mathcal{B}$.  
Precisely, we have the following. 

\begin{prop}[Harman] \label{Thm2}
Let $X\ge 4$ and $\mathcal{A}\subseteq \mathcal{B}:=[X/2,X)\cap \mathbb{N}$.  Suppose that for some $\eta>0$ and some $\lambda>0$ we have, for all sequences $a_m$ and $b_n$ of non-negative real  numbers satisfying 
		\begin{align} \label{ambnsizes}
			a_m\leq \tau(m), \ b_n\leq \tau(n)
		\end{align}
($\tau(n)$ here denotes the number of divisors of $n$) that
		\begin{align}\label{type_i_thm2}
			\sum_{\st{mn\in\mc{A}\\m\leq X^{15/22}}}a_m=\lambda \sum_{\st{mn\in\mc{B}\\m\leq X^{15/22}}}a_m+O\left(\lambda X^{1-\eta}\right)
		\end{align}
and 
\begin{align}\label{type_ii_thm2}
			\sum_{\st{mn\in\mc{A}\\X^{7/22}\leq m\leq X^{8/22}}}a_mb_n=\lambda \sum_{\st{mn\in\mc{B}\\ X^{7/22}\leq m\leq X^{8/22}}}a_mb_n +O\left(\lambda X^{1-\eta}\right).
		\end{align}
		Then the inequality 
		\begin{align*}
	          \sharp(\mc{A}\cap\mathbb{P})>\frac{\lambda}{10}\cdot \sharp(\mc{B}\cap\mathbb{P})
		\end{align*}
holds, provided that $X$ is large enough.
	\end{prop}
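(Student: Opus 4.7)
The proposition is a standard application of the Harman sieve machinery developed in \cite{prime_detective_sieve}, and the plan is to deduce it by combining Buchstab's identity with the Type I and Type II hypotheses \eqref{type_i_thm2} and \eqref{type_ii_thm2}.

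First, since $\mathcal{A} \subseteq \mathcal{B} = [X/2,X) \cap \mathbb{N}$, any element $a$ of $\mathcal{A}$ (or $\mathcal{B}$) lying in $\mathbb{P}$ is precisely an element free of prime factors below $X^{1/2}$: for if the least prime factor $p$ of $a \in [X/2,X)$ satisfies $p \geq X^{1/2}$, then $a$ must be prime itself since otherwise $a \geq p^2 \geq X$. Writing $S(\mathcal{S},z) := \sharp\{s\in\mathcal{S} : q \mid s \Rightarrow q \geq z\}$ for the sifting function, we therefore have $\sharp(\mathcal{A}\cap\mathbb{P}) = S(\mathcal{A},X^{1/2})$ and likewise for $\mathcal{B}$, so the task reduces to comparing these two sifting functions.

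Next, I would apply Buchstab's identity
$$S(\mathcal{A}, w) \;=\; S(\mathcal{A}, z) \;-\; \sum_{z \leq p < w} S(\mathcal{A}_p, p)$$
iteratively, starting from $S(\mathcal{A},X^{1/2})$, where $\mathcal{A}_p := \{a/p : a \in \mathcal{A},\ p \mid a\}$. This yields an alternating decomposition of $S(\mathcal{A},X^{1/2})$ into terms $S(\mathcal{A}_{p_1\cdots p_j},p_j)$ with $p_1 \leq \cdots \leq p_j < X^{1/2}$, each of which rewrites as a bilinear sum $\sum_{mn\in\mathcal{A}} a_m b_n$ with weights satisfying \eqref{ambnsizes}. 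Each such term is then classified by the size of a suitably chosen partial product $m = p_{i_1}\cdots p_{i_k}$: when $m \leq X^{15/22}$ the term is absorbed by the Type I hypothesis \eqref{type_i_thm2}, and when $X^{7/22} \leq m \leq X^{8/22}$ it is absorbed by the Type II hypothesis \eqref{type_ii_thm2}. In either case, the $\mathcal{A}$-sum equals $\lambda$ times the corresponding $\mathcal{B}$-sum, up to an error $O(\lambda X^{1-\eta})$; the number of Buchstab iterations can be held to $O((\log X)^{O(1)})$, so the total error stays $O(\lambda X^{1-\eta/2})$ after a harmless shrinkage of $\eta$.

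The main obstacle, and the source of the constant $1/10$, is the combinatorial control of the residual terms, namely those whose Buchstab decomposition admits no partial product inside $[1,X^{15/22}] \cup [X^{7/22},X^{8/22}]$. One must show that the absolute value of the residual is at most $\tfrac{9}{10}\lambda S(\mathcal{B},X^{1/2})$, so that subtracting it from the main term $\lambda S(\mathcal{B},X^{1/2})$ still leaves $\tfrac{1}{10}\lambda S(\mathcal{B},X^{1/2})$. This reduces to a Lebesgue-measure computation on the simplex of normalized exponents $\alpha_i = (\log p_i)/\log X$, decomposing the simplex into sub-simplices where either a Type II subproduct can be formed after a role-reversal (regrouping a block of small primes into a single factor of the correct size) or a direct Mertens-type estimate applies. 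The cut-offs $7/22$, $8/22$, $15/22$ are calibrated so that the bad measure does not exceed $9/10$, and Harman's framework in \cite{prime_detective_sieve} supplies the corresponding quantitative verification. Assembling the good Type I and Type II contributions with the bound on the residual and noting that $\lambda X^{1-\eta}$ is negligible against $\lambda \sharp(\mathcal{B}\cap\mathbb{P}) \gg \lambda X/\log X$ yields the claimed inequality $\sharp(\mathcal{A}\cap\mathbb{P}) > (\lambda/10)\sharp(\mathcal{B}\cap\mathbb{P})$ for $X$ sufficiently large.
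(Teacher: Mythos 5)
Your proposal is correct in outline but takes a different (and much more laborious) route than the paper, whose proof is essentially a one-line citation: it applies Harman's Theorem 2 from \cite{Harman} with the parameter $\theta=7/22$ (whence the exponents $15/22=1-7/22$ and $8/22=1-2\cdot 7/22$ and the constant $1/10$), observes that for $X\ge 4$ and $\mathcal{A}\subseteq[X/2,X)$ one has $\sharp(\mathcal{A}\cap\mathbb{P})=S(\mathcal{A},X^{1/2})$ --- exactly your opening reduction --- and adds one point you omit: Harman's hypotheses are stated for \emph{complex} $a_m,b_n$, so one must check that knowing \eqref{type_i_thm2} and \eqref{type_ii_thm2} only for non-negative real sequences suffices, which the paper does by splitting a complex sequence into real/imaginary and then positive/negative parts. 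You instead sketch the interior of Harman's argument (Buchstab iteration, classification of partial products, role reversal, bounding the discarded terms). That sketch is structurally faithful to how Harman's theorem is proved, but the entire non-trivial content of the proposition --- the quantitative verification that the terms unreachable by type I or type II information can be discarded (by positivity) at a cost of at most $\tfrac{9}{10}\lambda S(\mathcal{B},X^{1/2})$, which is precisely where the cut-offs $7/22$, $8/22$, $15/22$ and the constant $1/10$ come from --- is asserted and deferred to \cite{prime_detective_sieve} rather than carried out. So in substance your argument rests on the same external citation as the paper's; judged as a self-contained proof, the Buchstab-function/measure computation is a genuine gap, and judged as a citation-based proof, the remaining item to supply is the reduction from complex to non-negative real coefficients.
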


\begin{proof}
Apply \cite[Theorem 2]{Harman} with $\theta=7/22$ and note that if $X\ge 4$, then $\sharp(\mc{A}\cap\mathbb{P})=S(\mathcal{A},X^{1/2})$ and $\sharp(\mc{B}\cap\mathbb{P})=S(\mathcal{B},X^{1/2})$, where the sieve function $S(\mathcal{C},z)$ is defined as in \cite{Harman}. Also note that the sequences $a_m$ and $b_n$ can be confined to non-negative real numbers. (In the original statement, $a_m$ and $b_n$ were assumed to be complex. Extracting the real and imaginary parts of $a_m$ and $b_n$, it suffices to assume that they are real. Then dividing the sums over $m$ and $n$ into subsums according to the signs of $a_m$ and $b_n$, it suffices to assume that they are non-negative.)  
\end{proof}

Following usual terminology, we will refer to the sums in \eqref{type_i_thm2} as type I sums and the sums in \eqref{type_ii_thm2} as type II sums.
To establish Lemma \ref{bulk}, it now suffices to establish the bounds \eqref{type_i_thm2} and \eqref{type_ii_thm2} above for $X\in \mathcal{X}$, where $\mathcal{X}$ is an infinite subset of  $\mathbb{N}$.  This set is constructed as follows. By the Dirichlet approximation theorem, mentioned in Section~\ref{sec_into_PSprimes}, there exist infinitely many positive integers $q$ such that 
\begin{equation} \label{Diocond}
\left|\alpha - \frac{a}{q} \right|<q^{-2} \quad \mbox{for some } a\in \mathbb{Z} \mbox{ with } (a,q)=1.
\end{equation}
We shall take $\mathcal{X}$ to be the set of all positive integers $X$ such that
\begin{equation} \label{Xqcondi}
X^{2\theta+10\eta}\le q\le X^{1-\theta-10\eta}
\end{equation}
for some $q\in \mathbb{N}$ satisfying \eqref{Diocond}, where $\eta$ is a suitably small positive number. After having established \eqref{type_i_thm2} and \eqref{type_ii_thm2} for all $X\in \mathcal{X}$, Lemma \ref{bulk} follows from Proposition \ref{Thm2}. In the remainder of this article, we assume that $X\in \mathcal{X}$.

We conclude this section by deducing our main result, Theorem \ref{main_thm}, from Lemma \ref{bulk}.  Let $\gamma=1/c$ and $p\in \mathbb{O}$. Obviously, $p=\left[n^c\right]$ for some $n\in \mathbb{N}$ if and only if 
\begin{equation} \label{equi}
p^{\gamma}\le n< (p+1)^{\gamma} \quad \mbox{for some } n\in \mathbb{N}.
\end{equation}
Moreover, $(p+1)^{\gamma}=p^{\gamma}+\gamma p^{\gamma-1}+O\left(p^{\gamma-2}\right)$.  Hence, if 
$\delta$ is defined as in \eqref{deltadef}, then 
\begin{equation} \label{equi2}
p^{\gamma}\in (n-3\delta,n-\delta) \quad \mbox{for some } n\in \mathbb{N}
\end{equation}
implies \eqref{equi}, provided that $X/2\le p<X$ with $X$ large enough. Clearly, the condition \eqref{equi2} is equivalent to the inequality
\begin{equation} \label{equi3}
|| p^{\gamma}+2\delta||<\delta.
\end{equation} 
Therefore, Theorem \ref{main_thm} follows from Lemma \ref{bulk}.

\section{Fourier analysis}
We shall detect the conditions $||\alpha a+\beta||<\Delta$ and  $|| a^{\gamma}+2\delta||<\delta$ in \eqref{Adef} using the following Fourier analytic device.

\begin{lemma}[Harman] \label{chi(x)}
		Let $\xi \in(0,1)$ and $K$ be any positive integer. Define
		\begin{align*}
			\chi_{\xi}(x)=\begin{cases}
				1 , \ \text{ if } ||x||<\xi,\\
				0, \ \text{otherwise.}
			\end{cases}
		\end{align*}
		Then there are sequences $c^-_k$ and $c^+_k$ of complex numbers such that  
		\begin{align*}
			|c^{\pm}_k|\leq \min\left\{2\xi+\frac{1}{K+1},\  \frac{3}{2|k|}\right\}
		\end{align*} 
for all $k\in \mathbb{N}$ and
 \begin{align*}
			\chi_{\xi}^-(x) \leq \chi_{\xi}(x)\leq \chi_{\xi}^+(x)
		\end{align*}
for all $x\in \mathbb{R}$, where 
		\begin{align*}
\chi_{\xi}^-(x):=2\xi-\frac{1}{K+1}-\sum_{0<|k|\leq K}c^{-}_ke(kx)\quad \mbox{and} \quad \chi_{\xi}^+(x):= 2\xi+\frac{1}{K+1}+\sum_{0<|k|\leq K}c^{+}_ke(kx).
		\end{align*}
% (Here we define $e(z):=e^{2\pi i z}$ for all $z\in \mathbb{R}$). 		
\end{lemma}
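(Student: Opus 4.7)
The plan is to apply the Beurling--Selberg extremal function construction, the standard device for producing trigonometric polynomial majorants and minorants of the characteristic function of a short arc of $\mathbb{R}/\mathbb{Z}$ whose Fourier coefficients are jointly controlled by the $L^1$ mass of the approximant and by $1/|k|$-type decay. Since the lemma is attributed to Harman, the most direct route is to cite the corresponding statement in \cite{prime_detective_sieve}; the underlying argument runs as follows.

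I would first invoke Selberg's real-line construction: for any interval $I\subset\mathbb{R}$ and any $\delta>0$ there exist entire functions $F_{\pm}$ of exponential type $2\pi\delta$ with $F_{-}(x)\leq\chi_I(x)\leq F_{+}(x)$ for all $x\in\mathbb{R}$, satisfying $\int_{\mathbb{R}}(F_{+}-\chi_I)\,dx=\int_{\mathbb{R}}(\chi_I-F_{-})\,dx=1/\delta$, and with $F_{-}\geq 0$. Specialising to $I=[-\xi,\xi]$ with $\delta=K+1$ and periodising to $\mathbb{R}/\mathbb{Z}$ by Poisson summation produces $1$-periodic functions $\chi_{\xi}^{\pm}$ whose Fourier coefficients vanish for $|k|>K$; these are therefore trigonometric polynomials of degree at most $K$, the sandwich $\chi_{\xi}^{-}\leq\chi_{\xi}\leq\chi_{\xi}^{+}$ survives the periodisation, and the constant term is exactly
\[
\int_0^1 \chi_{\xi}^{\pm}(x)\,dx \;=\; 2\xi\pm\frac{1}{K+1},
\]
so $\chi_{\xi}^{\pm}$ takes the advertised shape.

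The bound $|c_k^{\pm}|\leq 2\xi+1/(K+1)$ is then immediate from the trivial estimate $|\widehat{f}(k)|\leq\|f\|_{L^1(\mathbb{T})}$: since $\chi_{\xi}^{\pm}\geq 0$ (the majorant dominates $\chi_{\xi}\geq 0$, the minorant is non-negative by choice of $F_{-}$), their $L^1$-norms equal their averages $2\xi\pm 1/(K+1)$, each at most $2\xi+1/(K+1)$. The decay bound $|c_k^{\pm}|\leq 3/(2|k|)$ requires the explicit Fourier transform of the Beurling--Selberg majorants: for an integer $k$ with $1\leq|k|\leq K$ the coefficient factors as $\widehat{\chi_{\xi}}(k)=\sin(2\pi k\xi)/(\pi k)$ times a Vaaler-type smoothing weight of modulus at most $1$, plus a lower-order error of size $O\bigl(1/(K+1)\bigr)$, and $1/\pi$ plus these errors sits safely below $3/2$.

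The main technical obstacle is the verification of the $1/|k|$-decay, which depends on the precise structure of the Beurling--Selberg construction rather than on anything combinatorial: the $L^1$ bound is essentially free, but extracting the explicit Fourier coefficients of the majorants requires the classical Vaaler computation involving $(\sin\pi z/\pi z)^2$ and its shifted reciprocals. As this material is standard and laid out in Harman's monograph, the cleanest completion of the proof is to invoke \cite{prime_detective_sieve} directly.
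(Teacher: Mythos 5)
Your proof coincides with the paper's: the authors' entire proof of Lemma \ref{chi(x)} is the single line ``This is \cite[Lemma~2.1]{prime_detective_sieve}'', which is exactly where you land. One caveat about your supplementary sketch: the Selberg minorant $F_-$ is \emph{not} non-negative (it satisfies $F_-\le \chi_I=0$ outside $I$ and is strictly negative somewhere there), so the $L^1$ bound for the minorant's coefficients should instead be obtained by noting that $\int_0^1\lvert\chi_\xi^-\rvert\le 2\xi+\tfrac{1}{K+1}$, e.g.\ by splitting $\chi_\xi^-$ into its positive part (dominated by $\chi_\xi$) and its negative part (whose integral is at most $\tfrac{1}{K+1}$); this does not affect your conclusion, since you ultimately defer to the citation.
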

\begin{proof}
This is \cite[Lemma~2.1]{prime_detective_sieve}. 
\end{proof}

We recall the definitions of $\Delta$ and $\delta$ in \eqref{Deltadef} and \eqref{deltadef}, fix an arbitrarily small $\eta>0$ and set
\begin{equation} \label{LHdef}
L:=\left[\Delta^{-1}X^{\eta}\right]=\left[X^{\theta+\eta}\right] \quad \mbox{and} \quad  H:=\left[\delta^{-1}X^{\eta}\right]=\left[10X^{1-\gamma+\eta}/\gamma\right].
\end{equation}
Then Lemma \ref{chi(x)} above with $\xi=\Delta,\delta$ and $K=L,H$ produces functions $\chi_{\Delta}^{\pm}$ and $\chi_{\delta}^{\pm}$ such that  
$$
\chi_{\Delta}^-(x) \leq \chi_{\Delta}(x)\leq \chi_{\Delta}^+(x)
$$
and
$$
\chi_{\delta}^-(y) \leq \chi_{\delta}(y)\leq \chi_{\delta}^+(y)
$$
for all $x,y\in \mathbb{R}$. We want to use these functions to bound the {\it product} 
$ \chi_{\Delta}(x)\chi_{\delta}(y)$ from below and above. To this end, we use the following observation from \cite{Harman2}: For all $x,y\in \mathbb{R}$, we have 
$$
\Xi^-(x,y)\le \chi_{\Delta}(x)\chi_{\delta}(y) \le \Xi^+(x,y),
$$
where
$$
\Xi^-(x,y):=\chi_{\Delta}^-(x)\chi_{\delta}^+(y)+\chi_{\Delta}^+(x)\chi_{\delta}^-(y)-\chi_{\Delta}^+(x)\chi_{\delta}^+(y) \quad \mbox{and} \quad \Xi^+(x,y):=\chi_{\Delta}^+(x)\chi_{\delta}^+(y).
$$  
Recalling the definitions of the sets $\mathcal{A}$ and $\mathcal{B}$ in \eqref{Adef} and \eqref{Bdef}, and keeping the condition of non-negativity of $a_m$ and $b_n$ in Proposition \ref{Thm2} in mind, it follows that the sums on the left-hand sides of \eqref{type_i_thm2} and \eqref{type_ii_thm2} are bounded from below and above by  
\begin{equation*}
\sum_{\st{mn\in\mc{B}\\m\leq X^{15/22}}} a_m\Xi^-\left(\alpha mn+\beta, (mn)^{\gamma}+2\delta\right) \le  \sum_{\st{mn\in\mc{A}\\m\leq X^{15/22}}}a_m\le  \sum_{\st{mn\in\mc{B}\\m\leq X^{15/22}}} a_m\Xi^{+}\left(\alpha mn+\beta, (mn)^{\gamma}+2\delta\right)
\end{equation*} 
and 
\begin{equation*}
\begin{split}
\sum_{\st{mn\in\mc{B}\\X^{7/22}\leq m\leq X^{8/22}}} a_mb_n\Xi^{-}\left(\alpha mn+\beta, (mn)^{\gamma}+2\delta\right)
\le & \sum_{\st{mn\in\mc{A}\\X^{7/22}\leq m\leq X^{8/22}}}a_mb_n \\ \le &
\sum_{\st{mn\in\mc{B}\\X^{7/22}\leq m\leq X^{8/22}}} a_mb_n\Xi^{+}\left(\alpha mn+\beta, (mn)^{\gamma}+2\delta\right).
\end{split}
\end{equation*}
Now using the definitions of $\Xi^-(x,y)$ and $\Xi^+(x,y)$, replacing $\chi_{\Delta}^{\pm}(x)$ and $\chi_{\delta}^{\pm}(y)$ by the relevant trigonometrical polynomials from Lemma \ref{chi(x)}, multiplying out, and noting the condition \eqref{ambnsizes} and the well-known bound $\tau(n)\ll_{\varepsilon} n^{\varepsilon}$ for every $\varepsilon>0$, we deduce that
\begin{align*}
			\sum_{\st{mn\in\mc{A}\\m\leq X^{15/22}}}a_m=\lambda \sum_{\st{mn\in\mc{B}\\m\leq X^{15/22}}}a_m+O\left(\lambda X^{1-\eta}+\Sigma_I\right)
		\end{align*}
and 
\begin{align*}
			\sum_{\st{mn\in\mc{A}\\X^{7/22}\leq m\leq X^{8/22}}}a_mb_n=\lambda \sum_{\st{mn\in\mc{B}\\ X^{7/22}\leq m\leq X^{8/22}}}a_mb_n +O\left(\lambda X^{1-\eta}+\Sigma_{II}\right)
		\end{align*}
with
\begin{equation} \label{lambdadef}
\lambda:=4\Delta\delta,
\end{equation}
where the error term $\Sigma_1$ is a sum of expressions of the form
\begin{equation} \label{S1def}
\mathcal{S}_1:=\delta \sum_{\st{mn\sim X\\m\le X^{15/22}}}\sum_{0<|l|\leq L}a_mc_le(\alpha lmn),
\end{equation}
\begin{equation} \label{S2def}
\mathcal{S}_2:=\Delta\sum_{\st{mn\sim X\\m\le X^{15/22}}} \sum_{0<|h|\leq H}a_m d_he\left(h(mn)^{\gamma}\right),
\end{equation}
\begin{equation} \label{S3def}
\mathcal{S}_3:= \sum_{\st{mn\sim X\\m\le X^{15/22}}}\sum_{0<|l|\leq L}\sum_{0<|h|\leq H}  a_mc_ld_he\left(\alpha lmn+h(mn)^{\gamma}\right),
\end{equation}
and the error term $\Sigma_2$ is a sum of expressions of the form
\begin{equation} \label{T1def}
\mathcal{T}_1:=\delta \sum_{\st{mn\sim X\\X^{7/22}\leq m\leq X^{8/22}}}\sum_{0<|l|\leq L} a_mb_nc_le(\alpha lmn),
\end{equation}
\begin{equation} \label{T2def}
\mathcal{T}_2:=\Delta \sum_{\st{mn\sim X\\X^{7/22}\leq m\leq X^{8/22}}}\sum_{0<|h|\leq H} a_mb_n d_he\left(h(mn)^{\gamma}\right),
\end{equation}
\begin{equation} \label{T3def}
\mathcal{T}_3:=\sum_{\st{mn\sim X\\X^{7/22}\leq m\leq X^{8/22}}} \sum_{0<|l|\leq L} \sum_{0<|h|\leq H}  a_mb_nc_ld_he\left(\alpha lmn+h(mn)^{\gamma}\right),
\end{equation}
the coefficients satisfying the bounds
\begin{align*}
			a_m\ll m^{\varepsilon}, \quad b_n\ll n^{\varepsilon},\quad c_l\ll \Delta,\quad d_h\ll \delta.
		\end{align*} 
% Here and in the following, the notation $x\sim X$ means that $X/2\le x<X$.
To establish Theorem \ref{main_thm}, it now suffices to prove that 
\begin{equation*} 
\mathcal{S}_i, \mathcal{T}_i\ll \lambda X^{1-\eta}
\end{equation*} 
for $i=1,2,3$ and a suitably small $\eta>0$. This is the content of the remainder of this paper. 

\section{Tailoring the type I and II sums}
It will be advantageous to tailor the expressions $\mathcal{S}_i$, $\mathcal{T}_i$ above. Breaking the sums on the right-hand sides of \eqref{S1def}-\eqref{T3def} into dyadic subsums with $m\sim M$, $n\sim N$, $|l|\sim U$, $|h|\sim V$ and scaling the coefficients, setting
$$
a_m^{\ast}:=\frac{a_m}{m^{\varepsilon}}, \quad b_n^{\ast}:=\frac{b_n}{n^{\varepsilon}}, \quad c_l^{\ast}:=
\frac{c_l}{\Delta}, \quad d_h^{\ast}:=\frac{d_h}{\delta},
$$
we reduce these sums to $O(\log^4 X)$ expressions of the form
\begin{equation} \label{S1astdef}
\mathcal{S}_1^{\ast}:=\sum_{\substack{m\sim M\\ n\sim N\\ mn\sim X}} \sum_{|l|\sim U}a_m^{\ast}c_l^{\ast}e(\alpha lmn),
\end{equation}
\begin{equation} \label{S2astdef}
\mathcal{S}_2^{\ast}:=\sum_{\substack{m\sim M\\ n\sim N\\ mn\sim X}} \sum_{|h|\sim V} a_m^{\ast}d_h^{\ast}e\left(h(mn)^{\gamma}\right),
\end{equation}
\begin{equation} \label{S3astdef}
\mathcal{S}_3^{\ast}:=\sum_{\substack{m\sim M\\ n\sim N\\ mn\sim X}} \sum_{|l|\sim U} \sum_{|h|\sim V}  a_m^{\ast}c_l^{\ast}d_h^{\ast}e\left(\alpha lmn+h(mn)^{\gamma}\right),
\end{equation}
\begin{equation} \label{T1astdef}
\mathcal{T}_1^{\ast}:=\sum_{\substack{m\sim M\\ n\sim N\\ mn\sim X}} \sum_{|l|\sim U} a_m^{\ast}b_n^{\ast}c_l^{\ast}e(\alpha lmn),
\end{equation}
\begin{equation} \label{T2astdef}
\mathcal{T}_2^{\ast}:=\sum_{\substack{m\sim M\\ n\sim N\\ mn\sim X}}\sum_{|h|\sim V}  a_m^{\ast}b_n^{\ast}d_h^{\ast}e\left(h(mn)^{\gamma}\right),
\end{equation}
\begin{equation} \label{T3astdef}
\mathcal{T}_3^{\ast}:=  \sum_{\substack{m\sim M\\ n\sim N\\ mn\sim X}} \sum_{|l|\sim U} \sum_{|h|\sim V}a_m^{\ast}b_n^{\ast}c_l^{\ast}d_h^{\ast}e\left(\alpha lmn+h(mn)^{\gamma}\right),
\end{equation}
where the coefficients $a_m^{\ast}$, $b_n^{\ast}$, $c_l^{\ast}$, $d_h^{\ast}$ are complex numbers satisfying
\begin{equation*} 
a_m^{\ast}, b_n^{\ast}, c_l^{\ast}, d_h^{\ast}\ll 1.
\end{equation*}	
Now it remains to prove that
\begin{equation} \label{Sitarget}
\mathcal{S}_i^{\ast}\ll  X^{1-2\eta} \mbox{ for } i=1,2,3,\ M\le X^{15/22},\ U\le L,\ V\le H  
\end{equation}
and 
\begin{equation} \label{Titarget}
\mathcal{T}_i^{\ast}\ll X^{1-2\eta} \mbox{ for } i=1,2,3,\ X^{7/22}\le M\le X^{8/22}, \ N\asymp X/M,\ U\le L, V\le H
\end{equation}
for $\eta>0$ small enough. 
% Here and throughout the sequel, for $x,y>0$, the notation $x\asymp y$ means that there are constants $C_2>C_1>0$ such that $C_1x\le y\le C_2x$. 

\section{Estimations of $\mathcal{S}_1^{\ast}$ and $\mathcal{T}_1^{\ast}$} 
Recall from section 2 that $X\in \mathcal{X}$. Therefore, \eqref{Diocond} and \eqref{Xqcondi} hold for this $X$ and suitable $a,q\in \mathbb{Z}$. To estimate $\mathcal{S}_1^{\ast}$ and $\mathcal{T}_1^{\ast}$, defined in \eqref{S1astdef} and \eqref{T1astdef}, we use the following standard bounds for sums involving linear exponential terms.  
\begin{lemma}\label{lemma_type_I_II_exponential_sum}
		Let $K,N\ge 1$ and $\alpha_k,\beta_n$ be any sequences of complex numbers. Assume that \eqref{Diocond} holds. Then we have
		\begin{equation} \label{linear}
	          \sum\limits_{n\asymp N} e(\alpha kn)\ll \min\left\{N,||\alpha k||^{-1}\right\} \mbox{ for all } k\in \mathbb{N},
\end{equation}
\begin{equation} \label{sumoverlinear}
	          \sum\limits_{k\asymp K} \min\left\{N,||\alpha k||^{-1}\right\}\ll \left(\frac{KN}{q}+K+q\right)(\log 2KNq)
		\end{equation}
		and 
		\begin{equation}\label{type_II_exponential}
\sum_{\substack{k\asymp K\\ n\asymp N\\ kn\asymp KN}}\alpha_k\beta_ne(\alpha mn)
\ll \left(\sum_{k\asymp K}|\alpha_k|^2\sum_{n\asymp N}|\beta_n|^2\right)^{1/2}\left(\frac{KN}{q}+K+N+q\right)^{1/2}(\log 2 KNq)^{1/2}.
\end{equation}
\end{lemma}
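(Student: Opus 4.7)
\medskip

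\noindent\textbf{Proof proposal.}
The three bounds are classical, so I would present them essentially in sequence, each building on the previous one.

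First, for \eqref{linear}, I would recognise the sum $\sum_{n\asymp N} e(\alpha k n)$ as a geometric progression with common ratio $e(\alpha k)$. The trivial length bound gives $N$, while summing the geometric progression gives
\[
\left|\sum_{n\asymp N} e(\alpha k n)\right|\le \frac{2}{|1-e(\alpha k)|}=\frac{1}{|\sin \pi \alpha k|}\ll \|\alpha k\|^{-1},
\]
using $|\sin \pi x|\ge 2\|x\|$. Taking the minimum of these yields \eqref{linear}.

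Next, for \eqref{sumoverlinear}, I would split the range $k\asymp K$ into $\lceil K/q\rceil$ consecutive blocks of length at most $q$. On a block of length $q$, the map $k\mapsto ak\bmod q$ is a bijection onto $\{0,1,\ldots,q-1\}$ because $(a,q)=1$, and the Diophantine hypothesis \eqref{Diocond} gives $\alpha k=ak/q+O(1/q)$ uniformly for $k$ in such a block. Hence for $k$ in one block, $\|\alpha k\|$ runs through values comparable to $0,1/q,2/q,\ldots,\lfloor q/2\rfloor/q$, so
\[
\sum_{k\text{ in block}}\min\{N,\|\alpha k\|^{-1}\}\ll N+\sum_{1\le j\le q/2}\min\{N,q/j\}\ll N+q\log(2q).
\]
Multiplying by the number of blocks $\ll 1+K/q$ and rearranging absorbs the $\log$ factors into $\log(2KNq)$, giving \eqref{sumoverlinear}. (The $K$ term appears so that the bound remains valid in the regime $K<q$.)

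Finally, for \eqref{type_II_exponential}, I would apply Cauchy--Schwarz in the $k$ variable and open the square:
\[
\left|\sum_{k,n}\alpha_k\beta_ne(\alpha kn)\right|^2\le \left(\sum_{k\asymp K}|\alpha_k|^2\right)\sum_{k\asymp K}\left|\sum_{n\asymp N}\beta_ne(\alpha kn)\right|^2.
\]
Expanding the inner square, bounding $|\beta_{n_1}\overline{\beta_{n_2}}|\le \tfrac12(|\beta_{n_1}|^2+|\beta_{n_2}|^2)$, and interchanging the $k$ and $(n_1,n_2)$ summations leads (by \eqref{linear} applied to the $k$-sum) to an expression of the form
\[
\sum_{n\asymp N}|\beta_n|^2\sum_{|m|\ll N}\min\{K,\|\alpha m\|^{-1}\}.
\]
The outer diagonal ($m=0$) contributes $K\sum_n|\beta_n|^2$, while for the off-diagonal I invoke \eqref{sumoverlinear} with the roles of $K$ and $N$ swapped, bounding the $m$-sum by $\ll (KN/q+N+q)\log(2KNq)$. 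Taking square roots yields \eqref{type_II_exponential}.

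The only genuinely delicate point is the bookkeeping in step two: one has to be careful about the constants implicit in $\asymp$ and about the regime $K<q$, which is what forces the additive $+K$ inside the parenthesis. Everything else is essentially routine once the Dirichlet approximation \eqref{Diocond} is in hand.
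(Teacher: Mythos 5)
Your proof is correct and follows the standard route; the paper offers no argument of its own here, simply citing Harman's \emph{Prime-detecting sieves}, Section 1.6, where exactly these three steps (geometric series, blocks of length $q$ using $(a,q)=1$, Cauchy--Schwarz reducing \eqref{type_II_exponential} to \eqref{linear} and \eqref{sumoverlinear}) are carried out. One small point to tighten in your second step: the assertion that \eqref{Diocond} gives $\alpha k = ak/q + O(1/q)$ uniformly on a block is literally false when $K\gg q$, since $(\alpha-a/q)k$ can be as large as $K/q^{2}$; the correct statement is that $(\alpha-a/q)k$ varies by at most $O(1/q)$ \emph{across} a single block of length $q$, so the values $\alpha k \bmod 1$ within a block are, up to a common shift $\theta_j$ and an $O(1/q)$ error, the $q$ distinct multiples of $1/q$ --- which still yields the bound $N + q\log 2q$ per block and hence \eqref{sumoverlinear}. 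With that repair the argument is complete; the rest (including the handling of the diagonal $m=0$ and the role of the $+K$ term when $K<q$) matches the cited source.
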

		\begin{proof}
			See \cite[section 1.6]{prime_detective_sieve}.
		\end{proof}
Now recalling the inequalities
\begin{equation} \label{mainconds}
 \frac{8}{9}<\gamma=1/c<1 \quad \mbox{and} \quad 0<\theta<\frac{9\gamma-8}{10}<\frac{1}{10}
\end{equation}
from Theorem \ref{main_thm},
we are ready to prove the desired bounds for $\mathcal{S}_1^{\ast}$ and $\mathcal{T}_1^{\ast}$. Throughout the sequel, we assume that $\varepsilon$ is a fixed but arbitrarily small positive real number. 

\begin{lemma} \label{ST1esti} There is $\eta>0$ such that  $\mathcal{S}_1^{\ast},\mathcal{T}_1^{\ast}\ll  X^{1-2\eta}$.
\end{lemma}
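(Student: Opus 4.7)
Both $\mathcal{S}_1^{*}$ and $\mathcal{T}_1^{*}$ carry only the linear phase $e(\alpha \ell m n)$, so the idea is to reduce them directly to Lemma~\ref{lemma_type_I_II_exponential_sum}. Since the phase depends on $\ell$ and $m$ only through the product $k = \ell m$, it is natural to combine these two variables into a single variable $k$ of size $|k| \asymp UM$ with coefficient
$$\alpha_k := \sum_{\substack{\ell m = k \\ \ell \sim U,\; m \sim M}} a_m^{*} c_{\ell}^{*},$$
satisfying $|\alpha_k| \ll \tau(k) \ll X^{\varepsilon}$ by the divisor bound and the trivial sizes of $a_m^{*}$ and $c_{\ell}^{*}$. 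A harmless split in the sign of $\ell$ reduces us to positive $k$ (the approximation hypothesis \eqref{Diocond} is symmetric under $\alpha \mapsto -\alpha$), and the coupling condition $mn\sim X$ can be absorbed into the coefficients without loss.

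For $\mathcal{S}_1^{*}$, which carries no coefficient on $n$, the plan is to first evaluate the inner sum over $n \sim N$ by the pointwise estimate \eqref{linear}, and then sum over $k$ using \eqref{sumoverlinear}. This produces
$$\mathcal{S}_1^{*} \ll X^{\varepsilon} \left( \frac{UX}{q} + UM + q \right).$$
Substituting $U \le L \asymp X^{\theta+\eta}$, $M \le X^{15/22}$ and the range $X^{2\theta+10\eta} \le q \le X^{1-\theta-10\eta}$ from \eqref{Xqcondi}, each of the three terms is $\ll X^{1-2\eta}$ for $\eta$ sufficiently small; the only non-trivial check is $UM \ll X^{1-2\eta}$, i.e.\ $\theta + 15/22 < 1$, which is automatic from $\theta < 1/10$.

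For $\mathcal{T}_1^{*}$, the extra coefficient $b_n^{*}$ prevents a pointwise evaluation of the $n$-sum, so I instead invoke the Cauchy--Schwarz type II bound \eqref{type_II_exponential} with variables $k = \ell m$ and $n \sim N$, yielding
$$\mathcal{T}_1^{*} \ll X^{\varepsilon} (UX)^{1/2} \left( \frac{UX}{q} + UM + N + q \right)^{1/2}.$$
Squaring, each of the four resulting terms must be $\ll X^{2-6\eta}$: the term $U^2X^2/q$ is absorbed by the lower bound $q \ge X^{2\theta+10\eta}$, the term $UXq$ by the upper bound $q \le X^{1-\theta-10\eta}$, the term $U^2XM$ by $M \le X^{8/22}$ combined with $2\theta < 1/5$, and the most delicate term $UXN$ by $N \le X^{15/22}$ (coming from $M \ge X^{7/22}$) combined with $\theta < 7/22$. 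No real obstacle arises in this lemma: it is essentially an exercise in standard linear-exponential-sum technology, and the hypothesis $\theta < (9\gamma-8)/10$ is nowhere near tight here. The interesting work of the paper, where that hypothesis and the precise Harman window $[X^{7/22}, X^{8/22}]$ must actually be exploited, is reserved for the sums $\mathcal{S}_2^{*}, \mathcal{S}_3^{*}, \mathcal{T}_2^{*}, \mathcal{T}_3^{*}$ carrying the nonlinear phase $e(h(mn)^{\gamma})$.
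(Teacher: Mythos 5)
Your argument is correct and is essentially the paper's own proof: both treat $\mathcal{S}_1^{\ast}$ by gluing $\ell$ and $m$ into $k=\ell m$ with divisor-bounded coefficients, applying \eqref{linear} to the $n$-sum and then \eqref{sumoverlinear}, and both treat $\mathcal{T}_1^{\ast}$ via the bilinear bound \eqref{type_II_exponential} with the same choice of variables, arriving at the same bounds $\bigl(UX/q+UM+q\bigr)X^{\varepsilon}$ and $(UX)^{1/2}\bigl(UX/q+UM+N+q\bigr)^{1/2}X^{\varepsilon}$ and the same numerical checks against \eqref{Xqcondi}. No further comment is needed.
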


\begin{proof}
Applying \eqref{linear} to the sum over $n$ on the right-hand side of \eqref{S1astdef}, we have 
$$
\mathcal{S}_1^{\ast}\ll \sum_{m\sim M} \sum_{l\sim U}a_m^{\ast}c_l^{\ast} \min\left\{\frac{X}{M},\frac{1}{||\alpha ml||}\right\}.
$$
(Here and below, we treat the cases $l>0$ and $l<0$ in a similar way.)
Writing $ml=k$, using the bound
\begin{equation} \label{alphakdef}
\alpha_k:=\sum_{\substack{m\sim M\\ l\sim U\\ ml=k}} a_m^{\ast}c_l^{\ast} \ll X^{\varepsilon}
\end{equation}
and applying \eqref{sumoverlinear} with $K:=MU$ and $N:=X/M$, it follows that
$$
\mathcal{S}_1^{\ast}\ll \left(\frac{UX}{q}+MU+q\right)X^{\varepsilon}.
$$ 
Recalling 
\begin{equation} \label{recall}
U\le L\ll X^{\theta+\eta}\quad \mbox{and} \quad X^{2\theta+10\eta}\le q\le X^{1-\theta-10\eta},
\end{equation} 
the condition on $\theta$ in \eqref{mainconds} and $M\le X^{15/22}$ from \eqref{Sitarget}, we deduce that
$$
\mathcal{S}_1^{\ast}\ll \left(X^{1-\theta-10\eta}+X^{15/22+\theta}\right)X^{\varepsilon+\eta} \ll X^{1-2\eta}
$$
if $\varepsilon$ and $\eta$ are suitably small, as desired. 

To estimate $\mathcal{T}_1^{\ast}$, we define and bound $\alpha_k$ as in \eqref{alphakdef}, set $K:=MU$ and $\beta_n:=b_n^{\ast}$ and recall  $N\asymp X/M$ from \eqref{Titarget}, thus obtaining
$$
\mathcal{T}_1^{\ast}\ll (UX)^{1/2} \left(\frac{UX}{q}+MU+\frac{X}{M}+q\right)^{1/2}X^{\varepsilon}.
$$ 
Recalling  \eqref{mainconds}, \eqref{recall} and $X^{7/22}\le M\le X^{8/22}$ from \eqref{Titarget}, we deduce that
$$
\mathcal{T}_1^{\ast}\ll X^{(1+\theta)/2} \left(X^{1-\theta-10\eta}+X^{8/22+\theta}+X^{15/22}\right)^{1/2}X^{\varepsilon+\eta}\ll X^{1-2\eta}
$$
if $\varepsilon$ and $\eta$ are suitably small, as desired.
\end{proof}

\section{Estimations of $\mathcal{S}_2^{\ast}$ and $\mathcal{T}_2^{\ast}$}
Our estimations of $\mathcal{S}_2^{\ast}$ and $\mathcal{T}_2^{\ast}$ follow closely the treatments of similar type I and II sums in \cite{H-B}. A crucial tool in this connection is the following standard estimate for exponential sums.  
\begin{lemma}[van der Corput]\label{lemma_van_bound}
		Let $a$ and $b$ be integers such that $a<b$. Suppose that $f:[a,b]\rightarrow \mathbb{R}$ is a twice continuously differentiable function satisfying 
		\begin{align*}
			f''(t)\asymp \Lambda \quad \text{for all } t\in [a,b],
		\end{align*} 
		where $\Lambda>0$. Then 
		\begin{align*}
			\left|\sum_{a<n\leq b}e(f(n))\right|\ll (b-a)\Lambda^{1/2}+\Lambda^{-1/2}.
		\end{align*}
	\end{lemma}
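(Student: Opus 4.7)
The plan is to deduce this van der Corput estimate from the Kuzmin--Landau first derivative test by partitioning $(a,b]$ according to the integer parts of $f'$. Since $f''(t)\asymp \Lambda>0$, the derivative $f'$ is strictly monotone on $[a,b]$, and its total oscillation is $\asymp (b-a)\Lambda$. So I would first reduce the problem to short ranges on which $f'$ takes values in intervals of length $<1$.

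More precisely, for each integer $j$ lying between $\lfloor f'(a)\rfloor$ and $\lfloor f'(b)\rfloor$, consider the (interval of) integers $n\in(a,b]$ with $f'(n)-j\in[-1/2,1/2]$; write $g(n):=f(n)-jn$, so that $g'(n)=f'(n)-j$ and $g''(n)=f''(n)\asymp\Lambda$. The number of such integers $j$ is $\ll (b-a)\Lambda+1$. Thus it suffices to show that on each such subinterval the sum $\sum e(g(n))$ is $\ll \Lambda^{-1/2}$; multiplying by the number of $j$'s then yields the claimed bound $(b-a)\Lambda^{1/2}+\Lambda^{-1/2}$.

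On a fixed subinterval $I_j$, set $\rho:=\Lambda^{1/2}$ and split into $I_j^{\text{close}}:=\{n\in I_j:|g'(n)|\le \rho\}$ and $I_j^{\text{far}}:=\{n\in I_j:\rho<|g'(n)|\le 1/2\}$. Because $g''\asymp\Lambda$ and $g'$ changes by at most $2\rho$ across $I_j^{\text{close}}$, we get $|I_j^{\text{close}}|\ll \rho/\Lambda=\Lambda^{-1/2}$, so the trivial bound handles this piece. On $I_j^{\text{far}}$, since $g'$ is monotone and $|g'|\ge\rho$, the Kuzmin--Landau lemma (first derivative test) yields $|\sum_{n\in I_j^{\text{far}}} e(g(n))|\ll \rho^{-1}=\Lambda^{-1/2}$ (treating separately the sub-pieces where $g'>0$ and $g'<0$). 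This gives the required per-$j$ estimate.

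The main obstacle is really a bookkeeping one: one must be careful that the pieces where $g'$ is close to $\pm 1/2$, i.e.\ where two adjacent $j$-classes meet, are not double-counted, and that on each monotonicity piece the hypotheses of Kuzmin--Landau are cleanly met. Everything else is straightforward once the threshold $\rho=\Lambda^{1/2}$ is chosen, which is forced by balancing the two regimes (short range versus first derivative test). An alternative route would be to apply Poisson summation / truncated Fourier expansion directly to $\sum e(f(n))$ and then estimate the resulting integrals by stationary phase, but the Kuzmin--Landau-based argument is shorter and aligns better with the authors' references to \cite{prime_detective_sieve}.
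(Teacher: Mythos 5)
Your argument is correct and is precisely the standard proof of the van der Corput second-derivative test via the Kuzmin--Landau first-derivative lemma; the paper gives no proof of its own but cites \cite[Theorem 2.2]{GrKo}, where exactly this argument is carried out. The one point you gloss over is that each $j$-subinterval actually contributes $\ll \Lambda^{-1/2}+1$ rather than $\ll \Lambda^{-1/2}$ (the $+1$ coming from the trivial count of integers in a short interval), so the total is $\ll \bigl((b-a)\Lambda+1\bigr)\bigl(\Lambda^{-1/2}+1\bigr)$; this is still $\ll (b-a)\Lambda^{1/2}+\Lambda^{-1/2}$ once one observes that for $\Lambda\ge 1$ the claimed bound already follows from the trivial estimate $b-a$, so one may assume $\Lambda<1$.
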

\begin{proof}
This is \cite[Theorem 2.2]{GrKo}.
\end{proof}
For the estimation of the type II sum $\mathcal{T}_2^{\ast}$, we will use the following bound due to Heath-Brown.

\begin{lemma}[Heath-Brown]\label{Lemma_H-B_bound1}
   Let $\mathcal{T}_2^{\ast}$ be given as in \eqref{T2astdef}. Then, for arbitrary small real numbers $\varepsilon, \eta>0$, we have 
    \begin{align*}
        |\mathcal{T}_2^{\ast}|^2\ll &X^{2(\varepsilon+\eta)}\left(X^{5/2-\gamma}+X^{3-2\gamma}+X^{3-\gamma}N^{-1} +X^{(10-5\gamma)/3}N^{1/3}+X^{(8-4\gamma)/3}N^{2/3}
			+\right. \\ & \left. X^{(10-8\gamma)/3}N^{4/3}\right).
    \end{align*}
\end{lemma}
\begin{proof}
The estimation of $\mathcal{T}_2^{\ast}$ is similar to the estimation of the term $L$ in  \cite[section~4]{H-B}. The final estimate is found in \cite[page 257]{H-B} (with $X$ replaced by $N$, and $N$ replaced by $Y$, respectively).
\end{proof}
Our estimate of the type II sum $\mathcal{T}_2^{\ast}$ is as follows. 
 \begin{lemma}\label{Lemma_H-B_bound}
		There is $\eta>0$ such that $\mathcal{T}_2^{\ast}\ll X^{1-2\eta}$. 
	\end{lemma}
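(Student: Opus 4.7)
My plan is to estimate $\mathcal{T}_2^{\ast}$ by peeling off the $h$-sum with the triangle inequality, applying Cauchy--Schwarz in the shorter variable $m$, and then controlling the resulting inner exponential sum in $m$ via Lemma~\ref{lemma_van_bound}. Writing $T_h := \sum_{m,n} a_m^{\ast} b_n^{\ast} e(h(mn)^\gamma)$, the bound $|d_h^{\ast}| \ll 1$ and the triangle inequality yield $|\mathcal{T}_2^{\ast}| \ll \sum_{|h|\sim V} |T_h|$, so it suffices to bound each $T_h$ individually with explicit dependence on $|h|$.

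For fixed $h$, I apply Cauchy--Schwarz in $m$ to obtain
\[
|T_h|^2 \le M \sum_{m\sim M} \Bigl|\sum_n b_n^{\ast} e(h m^\gamma n^\gamma)\Bigr|^2 = M \sum_{n_1, n_2} b_{n_1}^{\ast}\overline{b_{n_2}^{\ast}} \sum_{m} e(\psi m^\gamma), \quad \psi := h(n_1^\gamma - n_2^\gamma).
\]
Lemma~\ref{lemma_van_bound} applied with $f(m) = \psi m^\gamma$ (so $f''(m) \asymp |\psi| M^{\gamma-2}$) bounds the inner $m$-sum by $\ll |\psi|^{1/2} M^{\gamma/2} + |\psi|^{-1/2} M^{1-\gamma/2}$ when $\psi \ne 0$, and trivially by $M$ when $\psi = 0$ (i.e., when $n_1 = n_2$). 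I handle the off-diagonal contribution by splitting dyadically on $|u| := |n_1 - n_2| \sim U$, using the mean value estimate $|\psi| \asymp |h| N^{\gamma-1} U$ and counting $\ll NU$ pairs per dyadic block; summing over $U \in [1, N]$ dyadically and combining with the diagonal contribution $NM = X$ produces
\[
|T_h|^2 \ll MX + M^{1+\gamma/2} N^{2+\gamma/2} |h|^{1/2} + M^{2-\gamma/2} N^{2-\gamma/2} |h|^{-1/2}.
\]

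Taking square roots, exploiting $MN \asymp X$ to rewrite the nontrivial terms as pure powers of $X$, and summing over $|h| \sim V$ (using $\sum_{|h|\sim V} |h|^{\pm 1/4} \asymp V^{1 \pm 1/4}$), I arrive at
\[
|\mathcal{T}_2^{\ast}| \ll V(MX)^{1/2} + V^{5/4} X^{(2+\gamma)/4} + V^{3/4} X^{1-\gamma/4}.
\]
Substituting $M \le X^{8/22}$ and $V \le H \ll X^{1-\gamma+\eta}$ (cf.\ \eqref{LHdef} and \eqref{Titarget}), a routine check shows that each of the three terms is $\ll X^{1-2\eta}$ provided $\gamma > 15/22$, and this is satisfied with room to spare by $\gamma > 8/9$ from \eqref{mainconds}. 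The main obstacle I anticipate is the careful bookkeeping in the dyadic decomposition over $|u|$: one must verify that $f''$ has constant sign across the full range of $m$ (so that Lemma~\ref{lemma_van_bound} applies), that the mean value estimate $|\psi| \asymp |h| N^{\gamma-1} |u|$ is uniform in the dyadic block, and that the van der Corput error term $|\psi|^{-1/2} M^{1-\gamma/2}$ never exceeds the trivial bound $M$ — which turns out to be automatic precisely because $\gamma > 15/22$.
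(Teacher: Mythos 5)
Your overall strategy (Cauchy--Schwarz in $m$, then a second-derivative test on the resulting inner sum over $m$) is not the paper's route --- the paper imports Heath-Brown's estimate for the bilinear sum $L$ from \cite[Section 4]{H-B}, which after Cauchy--Schwarz performs a Weyl--van der Corput differencing with a \emph{restricted} shift parameter before applying derivative tests. Unfortunately your simpler argument does not close, for two reasons, one computational and one structural.

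The computational point: $M^{1+\gamma/2}N^{2+\gamma/2}=(MN)^{1+\gamma/2}\,N=X^{1+\gamma/2}N$ is \emph{not} a pure power of $X$; your middle term in the final display should be $V^{5/4}X^{(2+\gamma)/4}N^{1/2}$, not $V^{5/4}X^{(2+\gamma)/4}$. Since $N\asymp X/M\geq X^{14/22}$ in the type~II range and $V\asymp X^{1-\gamma}$, this term is at least $X^{5(1-\gamma)/4+(2+\gamma)/4+7/22}=X^{7/4-\gamma+7/22}$, which exceeds $X^{1.06}$ even as $\gamma\to 1$. So the bound you wrote down does not yield $\mathcal{T}_2^{\ast}\ll X^{1-2\eta}$, and the condition ``$\gamma>15/22$'' is an artifact of the dropped factor $N^{1/2}$.

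The structural point is why this cannot be patched. After Cauchy--Schwarz in the \emph{short} variable $m$, the phase $\psi m^{\gamma}$ has second derivative $\Lambda\asymp |h|\,|n_1-n_2|\,N^{\gamma-1}M^{\gamma-2}$. For generic off-diagonal pairs ($|n_1-n_2|\asymp N$, $|h|\asymp V\asymp X^{1-\gamma}$) this gives $\Lambda\asymp XM^{-2}\gg 1$ because $M\leq X^{8/22}<X^{1/2}$, so Lemma \ref{lemma_van_bound} returns $M\Lambda^{1/2}\asymp X^{1/2}>M$: worse than trivial for most pairs. Your closing sanity check guards the wrong term --- it is the main term $(b-a)\Lambda^{1/2}=|\psi|^{1/2}M^{\gamma/2}$, not the term $\Lambda^{-1/2}$, that blows past the trivial bound. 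To win in the window $X^{7/22}\leq M\leq X^{8/22}$ one must keep the effective shift small, e.g.\ by differencing in the long variable $n$ over shifts of length $Q<N$ (Weyl--van der Corput) and then optimizing $Q$, together with a spacing argument; this is exactly what Heath-Brown's treatment of $L$ does and what produces the six-term bound and the two ranges \eqref{M_1}, \eqref{M_2} whose union covers $[X^{7/22},X^{8/22}]$ precisely when $\gamma>8/9$.
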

	\begin{proof}
		 Applying Lemma~\ref{Lemma_H-B_bound1}, we have
		\begin{align*}
			\left|\mathcal{T}_2^{\ast}\right|^2\ll & \left(X^{5/2-\gamma}+X^{3-2\gamma}+X^{3-\gamma}N^{-1} +X^{(10-5\gamma)/3}N^{1/3}+X^{(8-4\gamma)/3}N^{2/3}
			+\right. \\ & \left. X^{(10-8\gamma)/3}N^{4/3}\right)X^{2(\varepsilon+\eta)}\\
			\ll & 
\left(X^{5/2-\gamma}+X^{3-2\gamma}+X^{2-\gamma}M+X^{(11-5\gamma)/3}M^{-1/3}+X^{(10-4\gamma)/3}M^{-2/3}+\right. \\ & \left. X^{(14-8\gamma)/3}M^{-4/3}\right)X^{2(\varepsilon+\eta)},
		\end{align*}
		where in the second line we have used that $MN\asymp X$.
		Thus $\mathcal{T}_2^{\ast}\ll X^{1-2\eta}$ if 
		\begin{align} \label{gammacond}
			\gamma>\frac{1}{2}+2\varepsilon+6\eta
		\end{align}
and 
		\begin{align} \label{M_1}
		 X^{5-5\gamma+6\varepsilon+18\eta}\ll M\ll X^{\gamma-2\varepsilon-6\eta}.
		\end{align}
Interchanging the roles of $m$ and $n$, we also have $\mathcal{T}_2^{\ast}\ll X^{1-2\eta}$ under the conditions \eqref{gammacond} and 
$$
X^{5-5\gamma+6\varepsilon+18\eta}\ll N\asymp \frac{X}{M} \ll X^{\gamma-2\varepsilon-6\eta},
$$  
i.e.
		\begin{align}\label{M_2}
			X^{1-\gamma+2\varepsilon+6\eta}\ll M\ll X^{5\gamma-4-6\varepsilon-18\eta}.
		\end{align}
Recalling $X^{7/22}\le M\le X^{8/22}$ from \eqref{Titarget} and $\gamma=1/c>8/9$ from Theorem \ref{main_thm}, and noting that $1-\gamma<7/22<8/22<5\gamma-4$ if  $\gamma>8/9$, the desired bound $\mathcal{T}_2^{\ast}\ll X^{1-2\eta}$ now follows upon taking $\varepsilon$ and $\eta$ sufficiently small.
	\end{proof}
We point out that we have not used \eqref{M_1} but only \eqref{M_2} to establish Lemma \ref{Lemma_H-B_bound}. However, if $b_n^{\ast}=1$, we immediately deduce the following result on $\mathcal{S}_2^{\ast}$ under the condition \eqref{M_1}.

\begin{lemma}[Large $M$] \label{S21} Suppose that $\varepsilon$ and $\eta$ are small enough and $$X^{5-5\gamma+6\varepsilon+18\eta}\ll M\ll X^{\gamma-2\varepsilon-6\eta}.$$ Then $\mathcal{S}_2^{\ast}\ll X^{1-2\eta}$.
\end{lemma} 

This will be useful if $M$ is large. Next, we prove the following for $M$ in a medium range.

\begin{lemma}[Medium $M$] \label{S22} Suppose that $\varepsilon$ and $\eta$ are small enough and 
$$
X^{2-2\gamma+2\varepsilon+6\eta}\ll M\ll X^{\min\{2/3,4\gamma-3-4\varepsilon-12\eta\}}.
$$ 
Then $\mathcal{S}_2^{\ast}\ll X^{1-2\eta}$.
	\end{lemma}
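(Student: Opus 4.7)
The strategy will parallel the proof of Lemma~\ref{Lemma_H-B_bound}, with the simplification that, since $\mathcal{S}_2^{\ast}$ carries no coefficient on $n$, the inner $n$-sum is perfectly smooth and can be treated directly by van der Corput's estimate, without a preliminary Cauchy--Schwarz in $n$ (which would cost a factor $\sqrt N$). This is precisely what gives $\mathcal{S}_2^{\ast}$ its wider admissible range of $M$ compared to $\mathcal{T}_2^{\ast}$.

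The plan is: first apply Cauchy--Schwarz to the pair $(m,h)$ to obtain
\[
|\mathcal{S}_2^{\ast}|^2 \ll MV\sum_{\substack{m\sim M\\|h|\sim V}}\Bigl|\sum_{\substack{n\sim N\\ mn\sim X}}e\bigl(h(mn)^\gamma\bigr)\Bigr|^2,
\]
then perform a Weyl shift in $n$ with a free parameter $R\in[1,N]$, turning the inner square into a diagonal contribution of size $N^2/R$ plus an $r$-sum of shifted exponential sums with phase $f_r(n)=hm^\gamma((n+r)^\gamma-n^\gamma)$. Since $|f_r''(n)|\asymp VM^\gamma|r|N^{\gamma-3}$, Lemma~\ref{lemma_van_bound} yields two contributions which, when summed over $h$, $m$, and $|r|<R$ and combined with $MN\asymp X$, produce a three-term bound of the shape
\[
|\mathcal{S}_2^{\ast}|^2\ll \frac{V^2X^2}{R} + V^{5/2}M^{3/2}R^{1/2}X^{(\gamma+1)/2} + V^{3/2}M^{-1/2}R^{-1/2}X^{(5-\gamma)/2}.
\]
Substituting $V\le H\ll X^{1-\gamma+\eta}$ and optimising $R$ over the admissible range (which requires $R\gg X^{2-2\gamma}$ from the first-term constraint and $R\le N=X/M$), each term can be made $\ll X^{2-4\eta}$ exactly when $M$ lies in the stated range: the lower bound $M\gg X^{2-2\gamma}$ comes from the third-term constraint at the smallest admissible $R$; the upper bound $M\ll X^{4\gamma-3}$ arises from balancing the second and third terms at the appropriate $R$; and the alternative $M\ll X^{2/3}$ comes from the constraint $R\le N$.

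The main technical obstacle will be the careful optimisation in the presence of three competing terms with different monotonicities in $R$, $V$, and $M$. A naive simultaneous balancing yields only $M\ll X^{2\gamma-4/3}$, and extracting the sharper exponent $4\gamma-3$ claimed in the statement requires a dyadic decomposition of the Weyl-shift parameter $r$ followed by a term-by-term analysis in the spirit of the treatment of the sum $L$ in \cite[\S 4]{H-B}, refined by the bilinear estimates of Balog--Friedlander \cite{Balog}. Once the dyadic bookkeeping is in place, verifying the exponents is a direct, if lengthy, calculation.
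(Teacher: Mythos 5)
There is a genuine gap here, and you have half-noticed it yourself. Your actual computation --- Cauchy--Schwarz over $(m,h)$, Weyl differencing in $n$ with parameter $R$, then Lemma \ref{lemma_van_bound} on the differenced sums --- does yield the three-term bound you display, but optimising $R$ in it gives only $M\ll X^{2\gamma-4/3}$: the binding requirement is $X^{2-2\gamma}\ll R\ll X^{4\gamma-2}M^{-3}$, coming from the first and second terms. For $\gamma$ near $8/9$ one has $2\gamma-4/3\approx 4/9$, strictly smaller than the required $4\gamma-3\approx 5/9$, and this loss is not cosmetic: the coverage argument following Lemma \ref{S23} needs $5-5\gamma<4\gamma-3$, which is exactly where the hypothesis $\gamma>8/9$ of Theorem \ref{main_thm} comes from. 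With your exponent one would instead need $5-5\gamma<2\gamma-4/3$, i.e.\ $\gamma>19/21$, so the theorem as stated would not follow. The closing appeal to ``a dyadic decomposition of the Weyl-shift parameter $r$'' cannot rescue this: summing the van der Corput bounds over dyadic blocks of $r$ reproduces exactly the $R^{1/2}$ and $R^{-1/2}$ factors you already have, so the three terms, and hence the constraint $M\ll X^{2\gamma-4/3}$, are unchanged.

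The structural problem is that Cauchy--Schwarz over $(m,h)$ followed by Weyl differencing in $n$ is the type II machinery (it amounts to using the exponent pair $A(1/2,1/2)=(1/6,2/3)$ on each smooth $n$-sum), which forfeits precisely the advantage you correctly identify in your opening paragraph. The paper instead follows Heath-Brown's type I treatment of the sum $K$ in \cite[Section 5]{H-B}: Poisson summation (the $B$-process) is applied to the smooth $n$-sum first, the dual sum is estimated with the exponent pair $(1/2,1/2)$ via \cite[Lemma 7]{H-B}, and the resulting sums over $m$ and $h$ are handled nontrivially rather than by taking absolute values termwise. This produces $\mathcal{S}_2^{\ast}\ll\left(X^{7/4-\gamma}M^{1/4}+X^{2-\gamma}M^{-1/2}+X^{15/8-\gamma}M^{-1/8}\right)X^{\varepsilon+\eta}$ for $M\le X^{2/3}$, whose first term is what gives the upper limit $M\ll X^{4\gamma-3-4\varepsilon-12\eta}$. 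To repair your argument you would need to import this Poisson-summation step, not refine the $r$-sum.
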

\begin{proof}
Following the estimation of the term $K$ in \cite[section 5]{H-B} and taking the exponent pair $(p,q)=(1/2,1/2)$ in \cite[Lemma 7]{H-B}, we have
\begin{align*}
\mathcal{S}_{2}^{\ast} \ll X^{-\gamma}\left(X^2N^{-1/4}+X^{3/2}N^{1/2}+X^{7/4}N^{1/8}\right)X^{\varepsilon+\eta},
		\end{align*}
provided that $N\ge X^{1/3}$. (Again, $X,N$ replace the variables $N,Y$ in \cite{H-B}, respectively.) Using $MN\asymp X$, it follows that
\begin{align*}
\mathcal{S}_{2}^{\ast} \ll \left(X^{7/4-\gamma}M^{1/4}+X^{2-\gamma}M^{-1/2}+X^{15/8-\gamma}M^{-1/8}\right)X^{\varepsilon+\eta},
		\end{align*}
provided that $M\le X^{2/3}$. 
Now, $$ X^{7/4-\gamma}M^{1/4}X^{\varepsilon+\eta}\ll X^{1-2\eta}\Longleftrightarrow M\ll X^{4\gamma-3-4\varepsilon-12\eta},$$
$$X^{2-\gamma}M^{-1/2}X^{\varepsilon+\eta}\ll X^{1-2\eta}\Longleftrightarrow M\gg X^{2-2\gamma+6\eta+2\varepsilon}$$
and $$ X^{15/8-\gamma}M^{-1/8}X^{\varepsilon+\eta}\ll X^{1-2\eta}\Longleftrightarrow M\gg X^{7-8\gamma+8\varepsilon+24\eta}.$$
Since, $8/9<\gamma<1$ and $\varepsilon,\eta$ are arbitrary small, we have $$7-8\gamma+8\varepsilon+24\eta<2-2\gamma+6\eta+2\varepsilon.$$
This implies the result in Lemma \ref{S22}.
\end{proof}

Finally, for small $M$, we prove the following by a direct appeal to Lemma \ref{lemma_van_bound}.

\begin{lemma}[Small $M$]\label{S23}
		Suppose that $\eta$ is small enough and $M\ll X^{\gamma-1/2-4\eta}$. Then $\mathcal{S}_2^{\ast}\ll X^{1-2\eta}$.
	\end{lemma}
	\begin{proof}
Applying Lemma \ref{lemma_van_bound} with $f(n):=h(mn)^\gamma$ to bound the smooth sum over $n$, and summing over $m$ and $h$ trivially, we obtain 
		\begin{align*}
			\mathcal{S}_2^{\ast} \ll 
			V^{3/2}X^{\gamma/2}M+V^{1/2}X^{1-\gamma/2}\ll
X^{3/2-\gamma+2\eta}M,
\end{align*}
where we have used $V\le H\ll X^{1-\gamma+\eta}$. This implies the result in Lemma \ref{S23}.
	\end{proof}
Since 
$$
(0,\gamma-1/2) \cup (2-2\gamma,\min\{2/3,4\gamma-3\})\cup (5-5\gamma,\gamma)=(0,\gamma)\supset (0,15/22)$$  
if $\gamma>8/9$, Lemmas \ref{S21}, \ref{S22} and \ref{S23} cover all relevant ranges if $\varepsilon$ and $\eta$ are small enough, and we thus have the following.

\begin{lemma}  \label{S2final} There is $\eta>0$ such that  $\mathcal{S}_2^{\ast}\ll X^{1-2\eta}$.\end{lemma}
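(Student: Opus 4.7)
The plan is straightforward: Lemma \ref{S2final} is a patching statement, and the proof amounts to verifying that the three ranges of $M$ covered by Lemmas \ref{S21}, \ref{S22} and \ref{S23} collectively exhaust the interval in which the type~I parameter $M$ can live, namely $M\le X^{15/22}$ (with $N\asymp X/M$). So I would organise the proof as a simple case distinction on the size of $M$ and then devote the bulk of the argument to checking that the three intervals really do overlap when $\gamma>8/9$, so that every admissible $M$ falls into at least one of them.

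More concretely, I would first record the three relevant exponent thresholds, with their $\varepsilon,\eta$-perturbations: Lemma \ref{S23} handles $M\ll X^{\gamma-1/2-4\eta}$; Lemma \ref{S22} handles $X^{2-2\gamma+2\varepsilon+6\eta}\ll M\ll X^{\min\{2/3,\,4\gamma-3-4\varepsilon-12\eta\}}$; Lemma \ref{S21} handles $X^{5-5\gamma+6\varepsilon+18\eta}\ll M\ll X^{\gamma-2\varepsilon-6\eta}$. Then I would verify three elementary inequalities at $\varepsilon=\eta=0$ to secure nontrivial overlap between consecutive intervals: $\gamma-1/2>2-2\gamma$ iff $\gamma>5/6$; $4\gamma-3>5-5\gamma$ iff $\gamma>8/9$; and $\gamma>15/22$. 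All three hold under our standing assumption $\gamma>8/9$; the second is the tightest, and it is the one that forces the hypothesis $\gamma>8/9$ (equivalently $c<9/8$) throughout the paper.

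By continuity, if $\varepsilon$ and $\eta$ are chosen small enough relative to the margin $\gamma-8/9>0$, the perturbed thresholds still satisfy the same chain of strict inequalities, so that
\begin{equation*}
(0,\,X^{\gamma-1/2-4\eta}]\,\cup\,[X^{2-2\gamma+2\varepsilon+6\eta},\,X^{\min\{2/3,4\gamma-3-4\varepsilon-12\eta\}}]\,\cup\,[X^{5-5\gamma+6\varepsilon+18\eta},\,X^{\gamma-2\varepsilon-6\eta}]\,\supseteq\,(0,\,X^{15/22}].
\end{equation*}
Given any admissible $M\le X^{15/22}$, one of the three lemmas applies and yields $\mathcal{S}_2^\ast\ll X^{1-2\eta}$; taking the minimum of the three values of $\eta$ produced by Lemmas \ref{S21}--\ref{S23} then gives a single $\eta>0$ that works uniformly.

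The only real obstacle is bookkeeping: one must confirm that the $\varepsilon,\eta$-corrections in the exponents of the three ranges do not accidentally destroy the overlap near the two junction points $M\asymp X^{\gamma-1/2}$ and $M\asymp X^{4\gamma-3}\asymp X^{5-5\gamma}$. Since the gaps $3\gamma-5/2$ and $9\gamma-8$ at these junctions are strictly positive whenever $\gamma>8/9$, it suffices to fix $\varepsilon,\eta$ small enough that $6\varepsilon+18\eta<9\gamma-8$ and $2\varepsilon+6\eta<3\gamma-5/2$, after which the covering is automatic and Lemma \ref{S2final} follows.
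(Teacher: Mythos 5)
Your proposal is correct and is essentially the paper's own argument: the paper proves Lemma \ref{S2final} by observing that $(0,\gamma-1/2)\cup(2-2\gamma,\min\{2/3,4\gamma-3\})\cup(5-5\gamma,\gamma)=(0,\gamma)\supset(0,15/22)$ for $\gamma>8/9$, so that Lemmas \ref{S21}--\ref{S23} cover all admissible $M\le X^{15/22}$ once $\varepsilon,\eta$ are small enough, with the binding constraint $5-5\gamma<4\gamma-3$ being exactly the source of $\gamma>8/9$, as you note.
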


Here we point out that the condition $\gamma>8/9$ above comes from the inequality $5-5\gamma<4\gamma-3$. 

\section{Estimations of $\mathcal{S}_3^{\ast}$ and $\mathcal{T}_3^{\ast}$}
Our estimations of $\mathcal{S}_3^{\ast}$ and $\mathcal{T}_3^{\ast}$ follow closely the treatments of similar type I and II sums in \cite{Balog}. We first estimate the type II sum $\mathcal{T}_3^{\ast}$.
\begin{lemma} \label{finalT3} There is $\eta>0$ such that $\mathcal{T}_3^{\ast}\ll X^{1-2\eta}$. 
\end{lemma}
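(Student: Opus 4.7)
The plan is to follow the treatment of analogous bilinear exponential sums in \cite{Balog}. I would apply Cauchy--Schwarz over the variable $m$ to square out the coefficient $a_m^\ast$, obtaining
\begin{equation*}
|\mathcal{T}_3^\ast|^2 \ll M \sum_{m\sim M}\left|\sum_{l,n,h} b_n^\ast c_l^\ast d_h^\ast \, e\bigl(\alpha lmn + h(mn)^\gamma\bigr)\right|^2.
\end{equation*}
Expanding the square and exchanging orders of summation isolates an inner smooth $m$-sum with phase
\begin{equation*}
f(m) := \alpha m(ln - l'n') + (hn^\gamma - h'n'^\gamma) m^\gamma,
\end{equation*}
whose second derivative satisfies $f''(m) = \gamma(\gamma-1)(hn^\gamma - h'n'^\gamma) m^{\gamma-2}$. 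The attractive feature is that the linear and monomial phases decouple cleanly upon squaring.

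To estimate the inner $m$-sum I would split according to whether $hn^\gamma = h'n'^\gamma$ or not. When $hn^\gamma \ne h'n'^\gamma$, Lemma~\ref{lemma_van_bound} (van der Corput) with $\Lambda \asymp |hn^\gamma - h'n'^\gamma| M^{\gamma-2}$ gives
\begin{equation*}
\Big|\sum_{m\sim M} e(f(m))\Big| \ll M^{\gamma/2}|hn^\gamma - h'n'^\gamma|^{1/2} + M^{1-\gamma/2}|hn^\gamma - h'n'^\gamma|^{-1/2}.
\end{equation*}
When $hn^\gamma = h'n'^\gamma$ the inner sum reduces to a linear exponential sum $\sum_m e(\alpha m(ln-l'n'))$, to be handled by Lemma~\ref{lemma_type_I_II_exponential_sum} together with the Diophantine condition \eqref{Diocond} for $\alpha$ and \eqref{Xqcondi} for $q$. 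The diagonal set $\{hn^\gamma = h'n'^\gamma\}$ is small: by monotonicity in $n$ one obtains $h=h'$ and $n=n'$ for irrational $\gamma$, and in all cases a counting argument restricts the relevant quadruples to $O(VN\cdot X^{\varepsilon})$, contributing only a negligible amount after summation in $l,l'$.

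The remaining task is to sum the non-diagonal van der Corput contribution over $(l,l',n,n',h,h')$, which requires bounding
\begin{equation*}
\sum_{\substack{n,n'\sim N\\ h,h'\sim V}} |hn^\gamma - h'n'^\gamma|^{\pm 1/2}
\end{equation*}
by standard separation-of-variables estimates, and then combining with the trivial sum over $l,l'\sim U$. After collecting everything I expect a bound of the shape
\begin{equation*}
|\mathcal{T}_3^\ast|^2 \ll X^{\varepsilon+\eta}\,\bigl(M^{1+\gamma}U^2V^2N^{2+\gamma} + M^{3-\gamma}U^2V N^{3-\gamma} + \text{diagonal}\bigr),
\end{equation*}
which I would then verify to be $\ll X^{2-4\eta}$ within the parameter ranges $X^{7/22}\le M\le X^{8/22}$, $U\le L\ll X^{\theta+\eta}$, $V\le H\ll X^{1-\gamma+\eta}$, using $\gamma>8/9$ and $\theta<(9/c-8)/10$. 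The main obstacle will be this final optimization: each term arising from the expansion must fit into the target exponent, and the specific $M$-range $[X^{7/22},X^{8/22}]$ is precisely the regime that forces the worst terms to balance, analogous to the role played by \eqref{M_1}--\eqref{M_2} in the proof of Lemma~\ref{Lemma_H-B_bound}.
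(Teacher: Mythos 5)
There is a structural gap: you apply Cauchy--Schwarz so that the \emph{short} variable $m$ becomes the inner oscillating sum, and this cannot succeed in the type II range. After your expansion you must sum $\bigl|\sum_{m\sim M}e(f(m))\bigr|$ over $(UNV)^2$ off-diagonal sextuples, with $N\asymp X/M\ge X^{14/22}$. Even granting perfect square-root cancellation $\ll M^{1/2}$ in every inner $m$-sum (van der Corput will generally do worse, and is worse than trivial on the large near-diagonal set where $|hn^\gamma-h'n'^{\gamma}|$ is small but nonzero --- a set your exact-diagonal analysis does not address), you get
\begin{equation*}
|\mathcal{T}_3^{\ast}|^2\ll M\cdot (UNV)^2\cdot M^{1/2}=U^2V^2X^2M^{-1/2},
\end{equation*}
which is $\ll X^{2}$ only if $M\gg (UV)^4\gg X^{4(1-\gamma)+4\theta}$. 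Since $4(1-\gamma)>4/9>8/22$ for $\gamma$ near $8/9$, this is incompatible with $X^{7/22}\le M\le X^{8/22}$ from \eqref{Titarget}. Your own anticipated bound $M^{1+\gamma}U^2V^2N^{2+\gamma}=U^2V^2X^{2+\gamma}M^{-1}$ already exceeds $X^2$ for all admissible $M$, so the ``final optimization'' you defer cannot close.

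The paper's proof goes the other way: for each fixed $l$ the triple sum over $m,n,h$ is estimated as in \cite[inequality (4.4)]{Balog}, where the cancellation is extracted from the \emph{long} variable via a Weyl--van der Corput shift with a free parameter $Q$ (this is the origin of the four terms in \eqref{T3ini}); the sum over $l$ is bounded trivially by the factor $U\le L\ll X^{\theta+\eta}$, and the Diophantine condition on $\alpha$ plays no role here. Optimizing $Q$ yields the admissible range \eqref{M_11}, which requires $M\gg X^{5-5\gamma+6\theta+\cdots}$ and so does \emph{not} contain $[X^{7/22},X^{8/22}]$; the essential last step, absent from your sketch, is to interchange the roles of $m$ and $n$, converting \eqref{M_11} into \eqref{M_12}, which does cover the type II range under \eqref{mainconds}. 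If you want to salvage a Cauchy--Schwarz argument, you must square out the coefficient on the long variable (or interchange $m$ and $n$ first) so that the surviving smooth sum has length $N$, not $M$, and you must supply a spacing argument for the near-diagonal contribution.
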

	\begin{proof}
		Estimating the triple sum over $m,n,h$ on the right-hand side of  \eqref{T3astdef} similarly as in \cite[inequality (4.4)]{Balog}, and summing over $l$ trivially, we obtain 
		\begin{equation} \label{T3ini}
\begin{split}
			\mathcal{T}_3^{\ast}\ll U & \left( M^{1/2}Q^{1/2}V^{1/2}X^{1/2}+V^{5/4}X^{1+\gamma/4}M^{-1/2}Q^{-1/4}+Q^{1/2}V^{1/2}X^{1-\gamma/2}+ \right. \\ & \left. Q^{1/4}V^{3/4}X^{1-\gamma/4}\right)X^{\varepsilon}
\end{split}
		\end{equation}
if $Q\gg 1$.
		We choose $Q$ in such a way that the first term satisfies $UM^{1/2}Q^{1/2}V^{1/2}X^{1/2+\varepsilon}\ll X^{1-2\eta}$, i.e. 
		\begin{align} \label{Qchoice}
			Q:=\frac{X^{1-2\varepsilon-4\eta}}{U^2MV}.
		\end{align}
Using $U\le L\ll X^{\theta+\eta}$ and $V\le H\ll X^{1-\gamma+\eta}$, the condition $Q\gg 1$ holds if 
\begin{equation} \label{firstMcond}
M\ll X^{\gamma-2\theta-2\varepsilon-7\eta}.
\end{equation}
Plugging \eqref{Qchoice} into \eqref{T3ini}, and using $U\ll X^{\theta+\eta}$ and $V\ll X^{1-\gamma+\eta}$ again, we get 
		\begin{align*}
			\mathcal{T}_3^{\ast} \ll & X^{1-2\eta}+\left(U^{3/2}V^{3/2}X^{3/4+\gamma/4}M^{-1/4}+X^{3/2-\gamma/2}M^{-1/2}+U^{1/2}V^{1/2}X^{5/4-\gamma/4}M^{-1/4}\right)X^{2\epsilon+\eta}\\
\ll  &X^{1-2\eta}+\left(X^{9/4+3\theta/2-5\gamma/4}M^{-1/4}+X^{3/2-\gamma/2}M^{-1/2}+X^{7/4+\theta/2-3\gamma/4}M^{-1/4}\right)X^{2\varepsilon+3\eta}.
		\end{align*} 
The last line is $\ll X^{1-2\eta}$ if 
\begin{equation} \label{secondMcond}
M\gg X^{5-5\gamma+6\theta+8\varepsilon+20\eta}.
\end{equation}
Combining \eqref{firstMcond} and \eqref{secondMcond}, the desired bound $\mathcal{T}_3^{\ast}\ll X^{1-2\eta}$ holds provided that 
		\begin{align}\label{M_11}
			X^{5-5\gamma+6\theta+8\varepsilon+20\eta}\ll M\ll X^{\gamma-2\theta-2\varepsilon-7\eta}.
		\end{align}
		Interchanging the roles of $m$ and $n$, we also have $\mathcal{T}_3^{\ast}\ll X^{1-2\eta}$ when
$$
X^{5-5\gamma+6\theta+8\varepsilon+20\eta}\ll N\asymp \frac{X}{M} \ll X^{\gamma-2\theta-2\varepsilon-7\eta},
$$ 
i.e.
		\begin{align}\label{M_12}
			X^{1-\gamma+2\theta+2\varepsilon+7\eta}\ll M\ll X^{5\gamma-4-6\theta-8\varepsilon-20\eta}.
		\end{align}
Recalling our condition $X^{7/22}\le M\le X^{8/22}$ from \eqref{Titarget} and observing that  
\begin{align*}
1-\gamma+2\theta<\frac{7}{22}<\frac{8}{22}<5\gamma-4-6\theta
		\end{align*}
under the conditions in \eqref{mainconds}, the desired bound $\mathcal{T}_3^{\ast}\ll X^{1-2\eta}$ now follows upon taking $\varepsilon$ and $\eta$ sufficiently small.	\end{proof}

We point out that we have not used \eqref{M_11} but only \eqref{M_12} to establish Lemma~\ref{finalT3}. However, if $b_n^{\ast}=1$, we immediately deduce the following result on $\mathcal{S}_3^{\ast}$ under the condition \eqref{M_11}.

\begin{lemma}[Large $M$] \label{S31} Suppose that $\varepsilon$ and $\eta$ are small enough and $$X^{5-5\gamma+6\theta+8\varepsilon+20\eta}\ll M\ll X^{\gamma-2\theta-2\varepsilon-7\eta}.$$ Then $\mathcal{S}_3^{\ast}\ll X^{1-2\eta}$.
\end{lemma} 

This will be useful if $M$ is large. Next, we prove the following for $M$ in a medium range.

\begin{lemma}[Medium $M$] \label{S32} Suppose that $\varepsilon$ and $\eta$ are small enough and 
$$
X^{7-8\gamma+8\theta+8\varepsilon+40\eta}\ll M\ll X^{4\gamma-3-4\theta-4\varepsilon-20\eta}.
$$ 
Then $\mathcal{S}_3^{\ast}\ll X^{1-2\eta}$.
	\end{lemma}

\begin{proof}
		Estimating the triple sum over $m,n,h$ on the right-hand side of  \eqref{S3astdef} similarly as in \cite[last inequality on page 60]{Balog}, and summing over $l$ trivially, we obtain 
\begin{equation*}
			\mathcal{S}_3^{\ast}\ll U \left(V^{5/4}M^{1/4}X^{1/2+\gamma/4}+V^{7/8}M^{-1/8}X^{1-\gamma/8}\right)X^{\varepsilon}.
\end{equation*}
Using $U\ll X^{\theta+\eta}$ and $V\ll X^{1-\gamma+\eta}$, it follows that
		\begin{align*}
			\mathcal{S}_3^{\ast} \ll \left(X^{7/4+\theta-\gamma}M^{1/4}+X^{15/8+\theta-\gamma}M^{-1/8}\right)X^{\varepsilon+3\eta}.
		\end{align*} 
This implies the result in Lemma \ref{S32}.
	\end{proof}

Finally, for small $M$, we prove the following by a direct appeal to Lemma \ref{lemma_van_bound}.

\begin{lemma}[Small $M$] \label{S33} Suppose that $\eta$ is small enough and $M\ll X^{\gamma-1/2-\theta-5\eta}$. Then $\mathcal{S}_3^{\ast}\ll X^{1-2\eta}$.
	\end{lemma}

\begin{proof} Applying Lemma \ref{lemma_van_bound} with $f(n):=h(mn)^\gamma$ to bound the smooth sum over $n$, and summing over $m$, $l$ and $h$ trivially, we obtain 
		\begin{align*}
			\mathcal{S}_3^{\ast} \ll U\left( 
			V^{3/2}X^{\gamma/2}M+V^{1/2}X^{1-\gamma/2}\right)\ll
X^{3/2-\gamma+\theta+3\eta}M,
\end{align*}
where we have used $U\ll X^{\theta+\eta}$ and $V\ll X^{1-\gamma+\eta}$. This implies the result in Lemma \ref{S33}. 		
		\end{proof}

Since 
$$(0,\gamma-1/2-\theta) \cup (7-8\gamma+8\theta,4\gamma-3-4\theta)\cup (5-5\gamma+6\theta,\gamma-2\theta)=(0,\gamma-2\theta)\supset (0,15/22)
$$  
under the conditions in \eqref{mainconds}, Lemmas \ref{S31}, \ref{S32} and \ref{S33} cover all relevant ranges if $\varepsilon$ and $\eta$ are small enough, and we thus have the following, completing the proof of Theorem \ref{main_thm}.

\begin{lemma}  \label{S3final} There is $\eta>0$ such that  $\mathcal{S}_3^{\ast}\ll X^{1-2\eta}$.
\end{lemma}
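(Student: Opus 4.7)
The plan is to combine the three preceding lemmas, Lemmas \ref{S31}, \ref{S32}, and \ref{S33}, to cover every value of $M$ that can arise in \eqref{Sitarget}, namely $M \le X^{15/22}$. Each lemma establishes $\mathcal{S}_3^{\ast} \ll X^{1-2\eta}$ on a specific exponent range for $M$, so the task reduces to checking that the three ranges together cover $(0,15/22)$ once the small $X^{\varepsilon},X^{\eta}$ perturbations in their endpoints have been absorbed.

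First I would strip off the $\varepsilon,\eta$ corrections and identify the main-term intervals: Lemma \ref{S33} covers $(0,\gamma-1/2-\theta)$, Lemma \ref{S32} covers $(7-8\gamma+8\theta,\,4\gamma-3-4\theta)$, and Lemma \ref{S31} (coming from the type II range \eqref{M_11} specialised to $b_n^{\ast}=1$) covers $(5-5\gamma+6\theta,\,\gamma-2\theta)$. I would then verify the two chaining inequalities. The first, $\gamma - 1/2 - \theta > 7 - 8\gamma + 8\theta$, rearranges to $\gamma > 5/6 + \theta$, which holds comfortably since $\gamma > 8/9$ and $\theta < 1/10$ by \eqref{mainconds}. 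The second, $4\gamma - 3 - 4\theta > 5 - 5\gamma + 6\theta$, rearranges to $\theta < (9\gamma - 8)/10$, which is precisely the hypothesis of Theorem \ref{main_thm}.

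Once the chain of overlaps is confirmed, the union of the three intervals equals $(0,\gamma-2\theta)$. To finish I would show $\gamma-2\theta > 15/22$: using $\theta < (9\gamma-8)/10$ we have $\gamma-2\theta > \gamma - (9\gamma-8)/5 = (8-4\gamma)/5$, and since $\gamma < 1$ this is strictly greater than $4/5 > 15/22$. Hence the three lemmas jointly cover the admissible range $(0,15/22)$ for $M$, and we conclude $\mathcal{S}_3^{\ast} \ll X^{1-2\eta}$ for every $M \le X^{15/22}$.

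The only point requiring genuine care is the bookkeeping of the $\varepsilon$ and $\eta$ perturbations in the endpoints of the three ranges (for instance, the lower endpoint of the range in Lemma \ref{S31} is $X^{5-5\gamma+6\theta+8\varepsilon+20\eta}$, not $X^{5-5\gamma+6\theta}$). However, each main-term inequality above is strict, so by choosing $\varepsilon$ and $\eta$ sufficiently small — depending only on $\gamma$ and $\theta$ — the perturbed intervals still chain together without gaps and still cover $(0,15/22)$. I do not anticipate any genuine obstacle beyond this routine verification, which also completes the proof of Theorem \ref{main_thm}.
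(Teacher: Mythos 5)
Your proposal is correct and is exactly the paper's argument: the paper likewise observes that the three ranges from Lemmas \ref{S31}, \ref{S32} and \ref{S33} union to $(0,\gamma-2\theta)\supset(0,15/22)$ under \eqref{mainconds}, with $\varepsilon,\eta$ small enough to absorb the endpoint perturbations. One small caveat: your first chaining inequality $\gamma>5/6+\theta$ does not follow from the separate bounds $\gamma>8/9$ and $\theta<1/10$ alone (e.g.\ $\gamma=0.9$, $\theta=0.09$ violates it); you need the joint constraint $\theta<(9\gamma-8)/10$ from \eqref{mainconds}, which gives $\theta<\gamma-5/6$ precisely because $\gamma>1/3$.
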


Here we point out that the condition $\theta<(9\gamma-8)/10$ in \eqref{mainconds} comes from the inequality $5-5\gamma+6\theta<4\gamma-3-4\theta$. We also note that $\mathcal{S}_2^{\ast}$ and $\mathcal{T}_2^{\ast}$ could have been bounded in a similar way as $\mathcal{S}_3^{\ast}$ and $\mathcal{T}_3^{\ast}$ above since the method of Balog and Friedlander also  
applies to the case when $l=0$. However, we decided to keep our separate treatment of $\mathcal{S}_2^{\ast}$ and $\mathcal{T}_2^{\ast}$ because the condition $\gamma>8/9$ emerges most naturally from it, and these terms are independent of $\theta$.  
% Generated by IEEEtranS.bst, version: 1.14 (2015/08/26)

\end{document}